\theoremstyle{plain}
\newtheorem{theorem}{Theorem}[section]
\newtheorem{lemma}[theorem]{Lemma}
\newtheorem{proposition}[theorem]{Proposition}
\newtheorem{corollary}[theorem]{Corollary}
\theoremstyle{definition}
\newtheorem{notation}[theorem]{Notation}
\newtheorem{example}[theorem]{Example}
\newtheorem{definition}[theorem]{Definition}
\theoremstyle{remark}
\def\g{\gamma}
\def\G{\Gamma}
\def\vt{\vartheta}
\newcommand{\sis}[2]{^{#1}\!\zeta_{#2}}
\begin{document}

%\doublespacing

%%-----------------------------
%%      the top matter
%%-----------------------------
\title[Rank properties of $A^+(B_n)$]{Rank Properties of Multiplicative Semigroup Reduct of Affine Near-Semirings over $B_n$}

\author[Jitender Kumar, K. V. Krishna]{Jitender Kumar and  K. V. Krishna}
\address{Department of Mathematics, Indian Institute of Technology Guwahati, Guwahati, India}
\email{\{jitender, kvk\}@iitg.ac.in}

%\date{...}

\begin{abstract}
This work investigates the rank properties of $A^+(B_n)$, the multiplicative semigroup reduct of the affine near-semirings over an aperiodic Brandt semigroup $B_n$. In this connection, the work obtains the small rank, lower rank and large rank of $A^+(B_n)$. Further, the work provides lower bounds for intermediate rank and upper rank of $A^+(B_n)$.
\end{abstract}

\subjclass[]{20M10}

\keywords{Affine near-semiring, Brandt semigroup, Rank properties}

\maketitle

\section*{Introduction}

Since the work of Marczewski in \cite{a.marczewski66}, many authors have studied the rank properties in the context of general algebras. The concept of rank for general algebras is analogous to the concept of dimension in linear algebra. The dimension of a vector space is the maximum cardinality of an independent subset, or equivalently, it is the minimum cardinality of a generating set of the vector space.
A subset $U$ of a semigroup $\G$ is said to be \emph{independent} if every element of $U$ is not in the subsemigroup generated by the remaining elements of $U$, i.e. \[ \forall a \in U, \; a \notin \langle U \setminus \{a\} \rangle .\] This definition of independence is analogous to the usual definition of independence in linear algebra. It can be observed that the minimum size of a generating set need not be equal to the maximum size of an  independent set in a semigroup. Accordingly, Howie and Ribeiro have considered the following possible definitions of ranks for a finite semigroup $\G$ (cf. \cite{a.Howie99,a.Howie00}).
\begin{enumerate}
\item $r_1(\G) = \max\{k :$ every subset $U$ of cardinality $k$ in $\G$ is independent\}.
\item $r_2(\G) = \min\{|U| : U \subseteq \G, \langle U\rangle = \G\}$.
\item $r_3(\G) = \max\{|U| : U \subseteq \G, \langle U\rangle = \G, U$ is independent\}.
\item $r_4(\G) = \max\{|U| : U \subseteq \G, U$  is independent\}.
\item $r_5(\G) = \min\{k :$ every subset $U$ of cardinality $k$ in $\G$ generates $\G$\}.
\end{enumerate}
It can be observed that \[r_1(\G) \le r_2(\G) \le r_3(\G) \le r_4(\G) \le r_5(\G).\] Thus,
$r_1(\G), r_2(\G), r_3(\G), r_4(\G)$ and $r_5(\G)$ are, respectively, known as \emph{small rank}, \emph{lower rank}, \emph{intermediate rank}, \emph{upper rank} and \emph{large rank} of $\G$.

While all these five ranks coincide for certain semigroups, there exist semigroups for which all these ranks are distinct. For instance, all these five ranks are equal to $|\G|$ for a finite left or right zero semigroup $\G$.  For $n > 2$,
Howie et al. have determined all these five ranks for an aperiodic Brandt semigroup $B_n$ through the papers \cite{a.Howie87,a.Howie99,a.Howie00} and observed that all these five ranks are different from each other.

The ranks of rectangular bands and monogenic semigroups were also established in \cite{a.Howie99,a.Howie00}. The lower rank of completely 0-simple semigroups was obtained by Ru\v{s}kuc \cite{a.ruskuc94}. The intermediate rank of $S_n$, the symmetric group of degree $n$, is determined to be $n-1$ by Whiston \cite{a.whiston00}. All independent generating sets of size $n-1$ in $S_n$ were investigated in \cite{a.camron02}. In \cite{t.jdm02}, Mitchell studied the rank properties of various groups, semigroups and semilattices. The rank properties of certain semigroups of order preserving transformations have been investigated in \cite{a.howie92} and further extended to orientation-preserving transformations in \cite{a.ping11}. Jitender and Krishna have studied the ranks of additive semigroup reduct of affine near-semirings over an aperiodic Brandt semigroup $B_n$ in \cite{a.jk13-2}.

In this work, we investigate the rank properties of $A^+(B_n)$ -- the multiplicative semigroup reduct of the affine near-semiring over an aperiodic Brandt semigroup $B_n$.  In this connection, we obtain the small, lower and large ranks of $A^+(B_n)$. Further, we provide lower bounds for intermediate rank and upper rank of $A^+(B_n)$. The remaining paper has been organized into six sections. Section 1 provides a necessary background material for the subsequent four main sections which are devoted to $r_1, r_2, r_3, r_4$ and $r_5$ of $A^+(B_n)$. We conclude the paper in Section 6.

\section{Preliminaries}

In this section, we provide a necessary background material and fix our notation. For more details one may refer to \cite{a.jk13}.

\begin{definition}
An algebraic structure $(S, +, \cdot)$ is said to be a \emph{near-semiring} if
\begin{enumerate}
\item $(S, +)$ is a semigroup,
\item $(S, \cdot)$ is a semigroup, and
\item $a\cdot(b + c) = a\cdot b + a\cdot c$, for all $a,b,c \in S$.
\end{enumerate}
\end{definition}

In this work, unless it is required, algebraic structures (such as semigroups, groups, near-semirings) will simply be referred by their underlying sets without explicit mention of their operations. Further, we write an argument of a function on its left, e.g. $xf$ is the value of a function $f$ at an argument $x$.

\begin{example}
Let $(\Gamma, +)$ be a semigroup and $M(\Gamma)$ be the set of all
mappings on $\Gamma$. The algebraic structure $(M(\G), +, \circ)$ is a
near-semiring, where $+$ is point-wise addition and $\circ$ is composition
of mappings, i.e., for $\gamma \in \Gamma$ and $f,g \in M(\Gamma)$,
$$\g(f + g)= \g f + \g g \;\;\;\; \text{and}\;\;\;\; \g(f \circ g) = (\g
f)g.$$ Also, certain subsets of $M(\G)$ are near-semirings. For instance,
the set $M_c(\Gamma)$ of all constant mappings on $\Gamma$ is a
near-semiring with respect to the above operations so that $M_c(\Gamma)$
is a subnear-semiring of $M(\G)$.
\end{example}

Now, we recall the notion of affine near-semirings from \cite{t.kvk05a}. Let $(\G, +)$ be a semigroup. An element $f \in M(\G)$ is said to be an \emph{affine map} if $f = g + h$, for some $g \in End(\G)$, the set of all endomorphisms over $\G$, and $h \in M_c(\G)$. The set of all affine mappings over $\G$, denoted by $\text{Aff}(\G)$, need not be a subnear-semiring of $M(\G)$. The \emph{affine near-semiring}, denoted by $A^+(\G)$, is the subnear-semiring generated by $\text{Aff}(\G)$ in $M(\G)$. Indeed, the subsemigroup of $(M(\G), +)$ generated by $\text{Aff}(\G)$ equals $(A^+(\G), +)$ (cf. \cite[Corollary 1]{a.kvk05}). If $(\G, +)$ is commutative, then $\text{Aff}(\G)$ is a subnear-semiring of $M(\G)$ so that $\text{Aff}(\G) = A^+(\G)$.

\begin{definition}
Given a finite group $G$ and a natural number $n$, write $[n] = \{1,2, \ldots, n\}$ and $B(G, n) = ([n] \times G \times [n]) \cup \{\vt\}$.  Define a binary operation (say, addition) on $B(G, n)$ by
\[ (i, a, j)+(k, b, l) =
                \left\{\begin{array}{cl}
                (i, ab, l) & \text {if $j = k$;}  \\
                \vt     & \text {otherwise,}
                \end{array}\right. \]
\[\mbox{and }\; \vt + (i, a, j)  = (i, a, j) + \vt = \vt + \vt =\vt.\]
With the above defined addition, $B(G, n)$ is a semigroup known as the \emph{Brandt semigroup}.
When $G$ is the trivial group, the Brandt semigroup $B(\{e\}, n)$ is aperiodic and it is denoted by $B_n$. For more details on Brandt semigroups, one may refer to \cite{b.Howie95}.
\end{definition}

In \cite{a.jk13}, Jitender and Krishna have studied the structure of (both additive and multiplicative) semigroup reducts of the near-semiring $A^+(B_n)$.  We now recall the results on $A^+(B_n)$ which are useful in the present work. The following concept plays a vital role in the study of $A^+(B_n)$.

Let $(\Gamma, +)$ be a semigroup with zero element $\vartheta$. For  $f \in M(\Gamma)$, the \emph{support of $f$}, denoted by supp$(f)$, is
defined by the set \[ {\rm supp}(f) = \{\alpha \in \Gamma \;|\; \alpha f \neq \vartheta\}.\] A function $f \in M(\Gamma)$ is said to be of \emph{k-support} if the cardinality of supp$(f)$ is $k$, i.e. $|{\rm supp}(f)| = k$. If $k = |\Gamma|$ (or $k = 1$), then $f$ is said to be of \emph{full support} (or \emph{singleton support}, respectively). For $X \subseteq M(\Gamma)$, we write $X_k$ to denote the set of all mappings of $k$-support in $X$, i.e. $$ X_k = \{ f \in X \mid f \; \text{is of $k$-support}\;  \}.$$

For ease of reference,  we continue to use the following notations for the elements of $M(B_n)$, as given in \cite{a.jk13}.

\begin{notation}\

\begin{enumerate}
\item For $c \in B_n$, the constant map that sends all the elements of $B_n$ to $c$ is denoted by $\xi_c$. The set of all constant maps over $B_n$ is denoted by $\mathcal{C}_{B_n}$.
\item For $k, l, p, q \in [n]$, the singleton support map that maps $(k, l)$ to $(p, q)$ is denoted by $\sis{(k, l)}{(p, q)}$.
\item For $p, q \in [n]$, the $n$-support map which sends $(i, p)$ (where $1 \le i \le n$) to $(i\sigma, q)$ using a permutation $\sigma \in S_n$ is denoted by $(p, q; \sigma)$. We denote the identity permutation on $[n]$ by $id$.
\end{enumerate}
\end{notation}

\begin{theorem}[\cite{a.jk13}]\label{t.affbn}
For $n \geq 1$, ${\rm Aff}(B_n) = {\rm Aff}(B_n)_n \cup \mathcal{C}_{B_n}$, where ${\rm Aff}(B_n)_n = \{(p, q; \sigma)\; |\; p, q \in [n], \sigma \in S_n\}$. Hence, $|{\rm Aff}(B_n)| = (n! + 1)n^2  + 1$.
\end{theorem}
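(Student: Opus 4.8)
The plan is to first determine $\mathrm{End}(B_n)$ completely and then to run through all pointwise sums $g+h$ with $g\in\mathrm{End}(B_n)$ and $h\in M_c(B_n)=\mathcal{C}_{B_n}$, since $\mathrm{Aff}(B_n)=\{g+h\mid g\in\mathrm{End}(B_n),\ h\in\mathcal{C}_{B_n}\}$. I regard $B_n=([n]\times[n])\cup\{\vartheta\}$ with $(i,j)+(k,l)=(i,l)$ when $j=k$ and $\vartheta$ otherwise, so the idempotents are $\vartheta$ and the diagonal elements $(a,a)$. For $g\in\mathrm{End}(B_n)$, applying $g$ to $\vartheta+\vartheta=\vartheta$ forces $\vartheta g$ to be idempotent. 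If $\vartheta g=(a,a)$, applying $g$ to $x+\vartheta=\vartheta$ gives $xg+(a,a)=(a,a)$ for every $x$, which forces $xg=(a,a)$; hence $g=\xi_{(a,a)}$. So assume $\vartheta g=\vartheta$ and set $I=\{i\in[n]\mid (i,i)g\neq\vartheta\}$. Each $(i,i)g$ is idempotent, say $(i,i)g=(i\pi,i\pi)$ for $i\in I$; since $(i,i)+(j,j)=\vartheta$ for $i\neq j$, the map $\pi\colon I\to[n]$ is injective. The key point is that $I$ is either $\varnothing$ or all of $[n]$: if $i\in I$ and $j\notin I$, then $(i,i)=(i,j)+(j,i)$ forces $(i,j)g$ and $(j,i)g$ to be non-$\vartheta$ with prescribed coordinates, and then $(j,j)=(j,i)+(i,j)$ gives $(j,j)g\neq\vartheta$, contradicting $j\notin I$. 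If $I=\varnothing$, then $(i,j)=(i,i)+(i,j)$ gives $(i,j)g=\vartheta$ for all $i,j$, so $g=\xi_\vartheta$. If $I=[n]$, then $\pi$ is a permutation $\sigma\in S_n$ and, using $(i,j)=(i,i)+(i,j)+(j,j)$ together with the same non-$\vartheta$ argument, $(i,j)g=(i\sigma,j\sigma)$ for all $i,j$. Each $\varphi_\sigma\colon(i,j)\mapsto(i\sigma,j\sigma),\ \vartheta\mapsto\vartheta$ is readily checked to be an endomorphism, so $\mathrm{End}(B_n)=\{\xi_\vartheta\}\cup\{\xi_{(a,a)}\mid a\in[n]\}\cup\{\varphi_\sigma\mid\sigma\in S_n\}$.

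Next I would compute $g+h$ in all cases. If $h=\xi_\vartheta$ or $g=\xi_\vartheta$, then $g+h=\xi_\vartheta\in\mathcal{C}_{B_n}$ because $\vartheta$ is a zero of $B_n$. If $g=\xi_{(a,a)}$ and $h=\xi_{(p,q)}$, then $x(g+h)=(a,a)+(p,q)$, which equals $(a,q)$ if $p=a$ and $\vartheta$ otherwise, so again $g+h\in\mathcal{C}_{B_n}$. The only remaining case is $g=\varphi_\sigma$, $h=\xi_{(p,q)}$: here $(i,j)(g+h)=(i\sigma,j\sigma)+(p,q)$, which equals $(i\sigma,q)$ exactly when $j\sigma=p$, i.e.\ $j=p\sigma^{-1}$, and is $\vartheta$ otherwise, while $\vartheta(g+h)=\vartheta$; thus $g+h$ sends $(i,p\sigma^{-1})$ to $(i\sigma,q)$ for every $i$, that is, $g+h=(p\sigma^{-1},q;\sigma)\in{\rm Aff}(B_n)_n$. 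This gives ${\rm Aff}(B_n)\subseteq{\rm Aff}(B_n)_n\cup\mathcal{C}_{B_n}$. For the reverse inclusion, $\xi_\vartheta=\xi_\vartheta+\xi_\vartheta$, $\xi_{(p,q)}=\xi_{(p,p)}+\xi_{(p,q)}$ and $(p,q;\sigma)=\varphi_\sigma+\xi_{(p\sigma,q)}$ exhibit every element of $\mathcal{C}_{B_n}\cup{\rm Aff}(B_n)_n$ as a sum of an endomorphism and a constant map.

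For the count: the maps $(p,q;\sigma)$ are pairwise distinct, since the support $\{(i,p)\mid i\in[n]\}$ recovers $p$, any nonzero image recovers $q$, and the whole assignment recovers $\sigma$; hence $|{\rm Aff}(B_n)_n|=n^2\cdot n!$. Since $|\mathcal{C}_{B_n}|=n^2+1$ and every constant map has support of size $0$ or $n^2+1\neq n$, the two sets are disjoint, so $|{\rm Aff}(B_n)|=n^2n!+n^2+1=(n!+1)n^2+1$.

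I expect the main obstacle to be the endomorphism classification, specifically the claim that the ``active'' index set $I$ cannot be a proper nonempty subset of $[n]$: it is tempting but false that every injective partial map on indices induces an endomorphism, and the Brandt multiplication identities such as $(i,j)+(j,i)=(i,i)$ and $(j,i)+(i,j)=(j,j)$ must be used carefully to exclude the intermediate configurations. The subsequent case analysis of $g+h$ is routine once this structure is in hand.
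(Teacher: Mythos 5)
Your proof is correct. Note that the paper itself does not prove this statement: Theorem \ref{t.affbn} is recalled from the reference \cite{a.jk13} without proof, so there is no in-paper argument to compare against; your route (first classifying ${\rm End}(B_n)$ as $\{\xi_\vartheta\}\cup\{\xi_{(a,a)}\mid a\in[n]\}\cup\{\varphi_\sigma\mid\sigma\in S_n\}$, then computing all sums $g+h$ with $h$ constant) is the natural one and matches what the cited source does. All the delicate points check out, in particular the exclusion of a proper nonempty ``active'' index set $I$ via $(i,i)=(i,j)+(j,i)$ and $(j,j)=(j,i)+(i,j)$, and the identification $\varphi_\sigma+\xi_{(p,q)}=(p\sigma^{-1},q;\sigma)$, which together with $\xi_{(p,p)}+\xi_{(p,q)}=\xi_{(p,q)}$ yields both inclusions and the count $(n!+1)n^2+1$.
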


Note that $A^+(B_1) = \{(1, 1; id)\} \cup \mathcal{C}_{B_1}$. For $n \ge 2$, the elements of $A^+(B_n)$ are given by the following theorem.

\begin{theorem}[\cite{a.jk13}]\label{t.class.a+bn}
For $n \geq 2$, $A^+(B_n) = {\rm Aff}(B_n) \cup \left\{\sis{(k, l)}{p, q}\; |\; k, l, p, q \in [n]\right\}$. Hence, $|A^+(B_n)| = (n! + 1)n^2 + n^4 + 1$.
\end{theorem}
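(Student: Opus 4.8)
The plan is to exploit the structural fact recalled above (see \cite{a.kvk05}): the subsemigroup of $(M(B_n), +)$ generated by ${\rm Aff}(B_n)$ coincides with $(A^+(B_n), +)$. Consequently, as a set, $A^+(B_n)$ is precisely the closure of ${\rm Aff}(B_n)$ under pointwise addition, and by Theorem~\ref{t.affbn} this generating set is $\{(p, q; \sigma) \mid p, q \in [n],\ \sigma \in S_n\} \cup \mathcal{C}_{B_n}$. So the entire problem reduces to determining which maps arise as finite pointwise sums of maps of the two forms $(p, q; \sigma)$ and $\xi_c$; the standing hypothesis $n \ge 2$ will be used only at the very end, to ensure that the new maps produced are genuinely outside ${\rm Aff}(B_n)$.

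First I would check that every singleton-support map $\sis{(k,l)}{(p,q)}$ already occurs as a sum of two generators: evaluating at an arbitrary point of $B_n$ and using the definition of $+$ on $B_n$ gives $\xi_{(p,k)} + (l, q; id) = \sis{(k,l)}{(p,q)}$. Hence ${\rm Aff}(B_n) \cup \{\sis{(k,l)}{(p,q)} \mid k,l,p,q \in [n]\} \subseteq A^+(B_n)$.

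For the reverse inclusion, I would show that the set $S := {\rm Aff}(B_n) \cup \{\sis{(k,l)}{(p,q)} \mid k,l,p,q \in [n]\}$ is closed under pointwise addition, by a finite case analysis on the types of the two summands. Sums of constants stay constant, $\xi_c + \xi_d = \xi_{c+d}$. For an $n$-support map and a non-$\vartheta$ constant $\xi_{(a,b)}$ one computes that $(p,q;\sigma) + \xi_{(a,b)}$ equals $(p, b; \sigma)$ when $q = a$ and $\xi_\vartheta$ otherwise, whereas $\xi_{(a,b)} + (p,q;\sigma) = \sis{(b\sigma^{-1}, p)}{(a, q)}$. For two $n$-support maps, $(p,q;\sigma) + (r,s;\tau)$ is $\xi_\vartheta$ when $p \ne r$, and when $p = r$ it equals $\sis{(q\tau^{-1}, p)}{(q\tau^{-1}\sigma,\, s)}$ — the point being that a permutation attains any prescribed value at exactly one argument, so the sum collapses to a singleton-support map and never produces a new $n$-support map. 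Finally, any sum one of whose summands is a singleton-support map has support of size at most one, hence is either $\xi_\vartheta$ or again of the form $\sis{(k',l')}{(p',q')}$, and $\xi_\vartheta + f = f + \xi_\vartheta = \xi_\vartheta$ for every $f$. Thus $S$ is closed and contains all the generators, so $A^+(B_n) \subseteq S$, giving $A^+(B_n) = S$.

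For the cardinality, observe that for $n \ge 2$ the three constituent families are pairwise disjoint by support size: maps in ${\rm Aff}(B_n)_n$ have support of size $n \ge 2$, maps in $\mathcal{C}_{B_n}$ have support of size $n^2$ or $0$, and the maps $\sis{(k,l)}{(p,q)}$ have support of size exactly $1$. There are $n^4$ maps in the last family, so $|A^+(B_n)| = |{\rm Aff}(B_n)| + n^4 = \big((n!+1)n^2 + 1\big) + n^4$. The main obstacle is the bookkeeping in the closure step: keeping track of the composition of the two permutations and of the matching of middle coordinates, and in particular verifying carefully that sums of two $n$-support maps never leak out of $S$.
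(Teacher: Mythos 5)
Your argument is correct, but note that this paper contains no proof of the statement to compare against: Theorem~\ref{t.class.a+bn} is quoted from \cite{a.jk13}, so what you have produced is an independent, self-contained verification rather than a variant of an argument in the present text. Your route is the natural one given what the paper recalls: reduce to the additive closure of ${\rm Aff}(B_n)$ via \cite[Corollary 1]{a.kvk05} and Theorem~\ref{t.affbn}, exhibit every singleton-support map as $\xi_{(p,k)} + (l,q;id) = \sis{(k,l)}{(p,q)}$ (this identity checks out), and then verify closure of ${\rm Aff}(B_n) \cup \{\sis{(k,l)}{(p,q)}\}$ under pointwise addition by cases. I verified your case computations: $(p,q;\sigma)+\xi_{(a,b)}$ is $(p,b;\sigma)$ or $\xi_\vt$ according as $q=a$ or not; $\xi_{(a,b)}+(p,q;\sigma)=\sis{(b\sigma^{-1},p)}{(a,q)}$; and $(p,q;\sigma)+(r,s;\tau)$ is $\xi_\vt$ when $p\neq r$ and $\sis{(q\tau^{-1},p)}{(q\tau^{-1}\sigma,s)}$ when $p=r$, the key point being that $i\tau=q$ has a unique solution, so no new $n$-support map can arise; sums involving a singleton-support summand have support of size at most one since $\vt$ is absorbing. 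The only slip is in the counting step: a nonzero constant map has support of size $n^2+1$ (all of $B_n$, including $\vt$, is mapped to the nonzero constant --- the paper's own notation $A^+(B_n)_{n^2+1}$ reflects this), not $n^2$; this does not affect the disjointness of the three families for $n\ge 2$, so the count $|A^+(B_n)| = (n!+1)n^2 + 1 + n^4$ stands.
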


In what follows, $A^+(B_n)$ denotes the multiplicative semigroup reduct $(A^+(B_n), \circ)$ of the affine near-semiring $(A^+(B_n), +, \circ)$.

\section{Small rank}

It can be easily observed that $A^+(B_1)$ is an independent set and none of its proper subsets generate $A^+(B_1)$. Hence, for $1 \le i \le 5$, we have \[r_i(A^+(B_1))  = |A^+(B_1)| = 3.\] In the rest of the paper we shall investigate the ranks of $A^+(B_n)$, for $n > 1$.  We obtain the small rank of $A^+(B_n)$ as a consequence of the following result due to Howie and Ribeiro.

\begin{theorem}[\cite{a.Howie00}]\label{r1-gm}
Let $\G$ be a finite semigroup, with $|\G| \ge 2$. If $\G$ is not a band, then $r_1(\G) = 1$.
\end{theorem}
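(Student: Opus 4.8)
The plan is to pin down $r_1(\G)$ by the sandwich $1\le r_1(\G)\le 1$. Recall that $r_1(\G)$ is, by definition, the largest $k$ (understood to range over $1\le k\le|\G|$) such that \emph{every} $k$-element subset of $\G$ is independent. Hence it suffices to prove two things: (i) every singleton subset of $\G$ is independent, and (ii) there is a $2$-element subset of $\G$ that is not independent.

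Step (i) is immediate from the definition of independence. For $U=\{a\}$ the only condition to verify is $a\notin\langle U\setminus\{a\}\rangle=\langle\varnothing\rangle$, and the subsemigroup generated by the empty set is empty; so every singleton is independent and $r_1(\G)\ge 1$.

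Step (ii) is where the hypothesis that $\G$ is \emph{not} a band is used. Since $\G$ is not a band it contains an element $x$ with $x^2\ne x$. Then $\{x,x^2\}$ is a genuine $2$-element subset of $\G$, and it fails to be independent because $x^2\in\{x,x^2,x^3,\dots\}=\langle x\rangle=\langle\{x,x^2\}\setminus\{x^2\}\rangle$. This already forces $r_1(\G)<2$: for every $k$ with $2\le k\le|\G|$ one can enlarge $\{x,x^2\}$ to a $k$-element subset of $\G$, and any superset of a non-independent set is non-independent (if $v\in\langle V\setminus\{v\}\rangle$ and $V\subseteq W$, then $v\in\langle W\setminus\{v\}\rangle$ since $V\setminus\{v\}\subseteq W\setminus\{v\}$), so no such $k$ can satisfy the defining property of $r_1$.

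Combining (i) and (ii) gives $r_1(\G)=1$. There is really no hard step here; the result is elementary, and the only point requiring a little care — not an obstacle so much as a convention to keep straight — is the handling of edge cases: that $\langle\varnothing\rangle$ is empty (so singletons are vacuously independent), and that the index $k$ in the definition of $r_1$ is restricted to $1\le k\le|\G|$ (so the vacuously-true ranges $k>|\G|$ do not spuriously inflate $r_1$). The hypothesis $|\G|\ge 2$ ensures a $2$-element subset exists at all, though it is in fact automatic once $\G$ is not a band, since $x$ and $x^2$ are then distinct.
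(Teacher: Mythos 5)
Your proof is correct. Note, however, that the paper does not prove this statement at all: it is quoted from Howie and Ribeiro \cite{a.Howie00} as a known result, so there is no internal proof to compare against. Your argument is the standard one for this fact: singletons are independent because $\langle\varnothing\rangle$ is empty, a non-band contains some $x$ with $x^2\neq x$, the pair $\{x,x^2\}$ is dependent since $x^2\in\langle x\rangle$, and the monotonicity observation (a superset of a dependent set is dependent) rules out every $k\ge 2$ in the maximum defining $r_1$, not just $k=2$. That last step, together with the convention $1\le k\le|\G|$, is exactly the care needed to make the ``max'' in the definition behave, and you handle both points explicitly, so the argument is complete.
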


Owing to the fact that $A^+(B_n)$, for $n \ge 2$, have some non idempotent elements, it is not a band. For instance, the singleton support maps $\sis{(k, l)}{(p, q)}$ with $(k, l) \ne (p, q)$ in $A^+(B_n)$ are not idempotents. Hence, we have the following corollory of Theorem \ref{r1-gm}.

\begin{corollary}
For $n \ge 2$, $r_1(A^+(B_n)) = 1$.
\end{corollary}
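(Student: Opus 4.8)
The plan is to apply Theorem~\ref{r1-gm} directly, so the entire task reduces to verifying that $A^+(B_n)$, for $n \ge 2$, fails to be a band, i.e. it contains at least one non-idempotent element. Since $|A^+(B_n)| = (n!+1)n^2 + n^4 + 1 \ge 2$ for $n \ge 2$, the cardinality hypothesis of Theorem~\ref{r1-gm} is automatic.

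First I would exhibit an explicit non-idempotent element. By Theorem~\ref{t.class.a+bn}, whenever $n \ge 2$ the semigroup $A^+(B_n)$ contains every singleton support map $\sis{(k,l)}{(p,q)}$ with $k,l,p,q \in [n]$. Choose $(k,l) \ne (p,q)$ (possible since $n \ge 2$ gives at least $n^2 \ge 4$ choices of pairs, hence distinct ones are available) and set $f = \sis{(k,l)}{(p,q)}$. Then $(k,l)f = (p,q)$ while every other element of $B_n$, including $(p,q)$ itself, is sent by $f$ to $\vartheta$. Composing, $(k,l)(f\circ f) = ((k,l)f)f = (p,q)f = \vartheta \ne (p,q) = (k,l)f$, so $f \circ f \ne f$ and $f$ is not idempotent. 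Consequently $A^+(B_n)$ is not a band.

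With that verification in hand, Theorem~\ref{r1-gm} immediately yields $r_1(A^+(B_n)) = 1$, completing the proof. There is essentially no obstacle here: the only point requiring a moment's care is confirming that the non-idempotent witness actually lies in $A^+(B_n)$ (which is precisely what Theorem~\ref{t.class.a+bn} guarantees for $n \ge 2$) and that the convention of writing arguments on the left makes $(k,l)(f \circ f) = ((k,l)f)f$, so the composition collapses the support to $\vartheta$ as claimed.
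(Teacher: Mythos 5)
Your proof is correct and follows the same route as the paper: both invoke Theorem~\ref{r1-gm} after noting that the singleton support maps $\sis{(k,l)}{(p,q)}$ with $(k,l)\ne(p,q)$ are non-idempotent, so $A^+(B_n)$ is not a band. Your only addition is the explicit computation $(k,l)(f\circ f)=\vartheta\ne(p,q)$, which the paper leaves to the reader.
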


\section{Lower rank}

In this section, first we ascertain that the set of $n$-support elements of $A^+(B_n)$ along with $\xi_\vt$ forms a subsemigroup which is isomorphic to the Brandt semigroup $B(S_n, n)$. Using this key result we obtain the lower rank of the semigroup $A^+(B_n)$. In what follows, a generating set of minimum cardinality is termed as a minimum generating set.

\begin{lemma}\label{l.a+bnc-pro}
For $n \ge 2$, let $f,  g_i (1 \le i \le k) \in A^+(B_n) \setminus \{\xi_\vt\}$ such that $f = g_1g_2 \cdots g_k$. Then,
\begin{enumerate}
\item $f \in A^+(B_n)_{n^2 + 1}$ if and only if $g_j \in  A^+(B_n)_{n^2 + 1}$ for some $j$;
\item $f \in A^+(B_n)_n$ if and only if $g_i \in  A^+(B_n)_n$ for all $i$; and
\item if $f \in A^+(B_n)_1$ then $g_j \in  A^+(B_n)_1$ for some $j$.
\end{enumerate}
\end{lemma}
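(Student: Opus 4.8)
The plan is to reduce the statement to two elementary observations. The first concerns composition in $M(B_n)$: for all $g,h \in M(B_n)$ and $x \in B_n$ one has $x(g \circ h) = (xg)h$, hence ${\rm supp}(g \circ h) = \{x \in B_n : xg \in {\rm supp}(h)\}$; in particular ${\rm supp}(g \circ h) \subseteq {\rm supp}(g)$ whenever $\vartheta \notin {\rm supp}(h)$, and $g \circ \xi_\vartheta = \xi_\vartheta$ for every $g$. The second is a reading of Theorem~\ref{t.class.a+bn}: for $n \ge 2$, $A^+(B_n) \setminus \{\xi_\vartheta\}$ is the disjoint union $A^+(B_n)_{n^2+1} \cup A^+(B_n)_n \cup A^+(B_n)_1$, where $A^+(B_n)_{n^2+1} = \mathcal{C}_{B_n} \setminus \{\xi_\vartheta\}$ consists of the constant maps of nonzero value, $A^+(B_n)_n$ of the maps $(p,q;\sigma)$, and $A^+(B_n)_1$ of the maps $\sis{(k,l)}{(p,q)}$; since $n^2+1 > n > 1$, lying in one of these three classes excludes the other two. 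Finally, reading off the explicit forms, every element of $A^+(B_n)$ outside $A^+(B_n)_{n^2+1}$ fixes $\vartheta$ (i.e. satisfies $\vartheta g = \vartheta$) and is injective on its support (equivalently, sends at most one point of $B_n$ to each nonzero value).

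For (1), I would prove the forward implication contrapositively. If no $g_i$ lies in $A^+(B_n)_{n^2+1}$, then every $g_i$ fixes $\vartheta$, so $\vartheta \notin {\rm supp}(g_2 \circ \cdots \circ g_k)$, whence ${\rm supp}(f) \subseteq {\rm supp}(g_1)$ and $|{\rm supp}(f)| \le n < n^2+1$, so $f \notin A^+(B_n)_{n^2+1}$. Conversely, if $g_j = \xi_c$ with $c \ne \vartheta$, then $\xi_c \circ g_{j+1} \circ \cdots \circ g_k$ is the constant map of value $c\,(g_{j+1} \circ \cdots \circ g_k)$, and any map followed by a constant map equals that constant map; hence $f$ is a constant map, and as $f \ne \xi_\vartheta$ its value is nonzero, i.e. $f \in A^+(B_n)_{n^2+1}$.

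For (2), the ``if'' direction uses the routine computation $(p,q;\sigma) \circ (p',q';\sigma') = (p,q';\sigma\sigma')$ when $q = p'$ and $= \xi_\vartheta$ otherwise; by induction a product of maps from $A^+(B_n)_n$ is $\xi_\vartheta$ or again in $A^+(B_n)_n$ (a partial product equal to $\xi_\vartheta$ would force $f = \xi_\vartheta$, since $\xi_\vartheta$ followed by an $n$-support map is $\xi_\vartheta$), so from $f \ne \xi_\vartheta$ we get $f \in A^+(B_n)_n$. For the converse, contrapositively: if some $g_j \in A^+(B_n)_{n^2+1}$ then part (1) gives $f \in A^+(B_n)_{n^2+1}$, so $f \notin A^+(B_n)_n$; otherwise some $g_j \in A^+(B_n)_1$ and no $g_i \in A^+(B_n)_{n^2+1}$, and then, writing $h = g_j \circ \cdots \circ g_k$ and $\phi = g_1 \circ \cdots \circ g_{j-1}$, the first observation gives $|{\rm supp}(h)| \le |{\rm supp}(g_j)| = 1$, while part (1) keeps $\phi$ out of $A^+(B_n)_{n^2+1}$, so $\phi$ is injective on its support. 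Thus either $h = \xi_\vartheta$, forcing $f = \phi \circ \xi_\vartheta = \xi_\vartheta$ (impossible), or ${\rm supp}(h) = \{y\}$ with $y \ne \vartheta$ and ${\rm supp}(f) = \{x : x\phi = y\}$ has at most one element; in every case $|{\rm supp}(f)| \le 1 < n$, so $f \notin A^+(B_n)_n$.

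Part (3) is then immediate: if no $g_j \in A^+(B_n)_1$, then every $g_j$ lies in $A^+(B_n)_{n^2+1}$ or in $A^+(B_n)_n$, so by part (1) or part (2) (using $f \ne \xi_\vartheta$) we get $f \in A^+(B_n)_{n^2+1}$ or $f \in A^+(B_n)_n$; in either case $f \notin A^+(B_n)_1$. The bookkeeping to watch is the degenerate index values $j = 1$ and $j = k$ in the computations above, which only streamline the argument. I expect the one genuinely delicate point to be the bound $|{\rm supp}(f)| \le 1$ in the converse half of (2): it rests on a composite of $n$-support and singleton maps being injective on its support, and the cleanest route to this is to prove (1) first and then use it to confine such a composite to $A^+(B_n)_n \cup A^+(B_n)_1 \cup \{\xi_\vartheta\}$, where injectivity on the support is visible term by term.
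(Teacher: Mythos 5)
Your proposal is correct and takes essentially the same route as the paper's proof: classify elements by support size via Theorem \ref{t.class.a+bn}, use the fact that non-constant elements fix $\vartheta$ (so constants are exactly what produce full support) for part (1), use the composition rule $(p,q;\sigma)(p',q';\sigma')$ for the converse of part (2), and deduce (3) from (1) and (2). The only difference is that you spell out, via injectivity on supports, the step the paper dismisses as clear, namely that a singleton-support factor forces $|\mathrm{supp}(f)|\le 1$.
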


\begin{proof}$\;$
\begin{enumerate}
\item If $g_i \notin  A^+(B_n)_{n^2 + 1}$ for all $i$, then, by \cite[Proposition 2.7]{a.jk13}, $\vt \notin {\rm supp}(g_i)$ so that  $\vt \notin {\rm supp}(f)$ and hence $f \not\in A^+(B_n)_{n^2 + 1}$. Since the composition of a constant map with any map is a constant map, we have the converse.
\item  Suppose $f \in A^+(B_n)_n$. From (1) above, $g_i \notin \mathcal{C}_{B_n}$ for all $i$. If $g_j \in A^+(B_n)_1$, for some $j$, then clearly $|{\rm supp}(f)| \le 1$.  Hence, by Theorem \ref{t.class.a+bn}, we have $g_i \in  A^+(B_n)_n$ for all $i$. Conversely, for $1 \le i \le k$, suppose $g_i = (p_i, q_i; \sigma_i)$. For $1 \le i \le k-1$, note that  $g_ig_{i + 1}$ is either $\xi_\vt$  or $(p_i, q_{i + 1}; \sigma_i\sigma_{i + 1})$, where $q_i = p_{i + 1}$. Consequently, since $f \ne \xi_{\vt}$, we have $f = (p_1, q_k; \sigma_1\sigma_2\cdots \sigma_k) \in A^+(B_n)_n$.
\item Follows from (1) and (2).
\end{enumerate}
\end{proof}

\begin{corollary}\label{c.sis-fs-gset}
Any generating  subset of $A^+(B_n)$ contains at least a singleton support element and a full support element.
\end{corollary}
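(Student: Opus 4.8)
The plan is to read the corollary off the ``only if'' directions of parts~(1) and~(3) of Lemma~\ref{l.a+bnc-pro}. Let $U$ be any generating subset of $A^+(B_n)$. Since $n \ge 2$, both $A^+(B_n)_{n^2+1}$ and $A^+(B_n)_1$ are nonempty: a nonzero constant map $\xi_c$ has ${\rm supp}(\xi_c) = B_n$, so it is a full support element, and any $\sis{(k,l)}{p,q}$ is by definition a singleton support element. Hence each such map is expressible as a product of members of $U$, and the task reduces to showing that such a product must contain, respectively, a full support factor and a singleton support factor.

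First I would fix a full support map and write it as $f = g_1 g_2 \cdots g_k$ with every $g_i \in U$. If no $g_i$ equals $\xi_\vt$, then Lemma~\ref{l.a+bnc-pro}(1) applies verbatim and gives a $g_j \in A^+(B_n)_{n^2+1}$. If some $g_i = \xi_\vt$, I would evaluate the composite: after the factor $\xi_\vt$ the running value is $\vt$, so $f$ is the constant map with value $\vt g_{i+1} \cdots g_k$; as $f \ne \xi_\vt$ this value is nonzero, so stepping through $g_{i+1}, g_{i+2}, \dots$ some later factor $g_m$ must have $\vt$ in its support, and by Theorem~\ref{t.class.a+bn} a map in $A^+(B_n)$ with $\vt$ in its support is necessarily a nonzero constant, i.e. a full support element of $U$. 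For singleton support the argument is even shorter: writing a $1$-support map as $f = g_1 \cdots g_k$ over $U$, no $g_i$ can be $\xi_\vt$ (else $f$ is a constant map, whose support is $0$ or $n^2+1$, not $1$), so Lemma~\ref{l.a+bnc-pro}(3) applies and yields a singleton support factor in $U$.

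The only delicate point is that Lemma~\ref{l.a+bnc-pro} is stated only for factors in $A^+(B_n) \setminus \{\xi_\vt\}$, so the cases in which $\xi_\vt$ occurs among the chosen generators must be dealt with by hand; but this is routine, since post-composing any map with $\xi_\vt$ sends everything to $\vt$, so a product in which $\xi_\vt$ appears is a constant map and, unless it equals $\xi_\vt$, forces a nonzero constant factor to its right. I do not anticipate any obstacle beyond this bookkeeping.
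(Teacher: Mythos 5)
Your proposal is correct and follows the paper's intended route: the corollary is stated there without proof, as an immediate consequence of parts (1) and (3) of Lemma~\ref{l.a+bnc-pro}, exactly as you derive it. Your extra bookkeeping for factorizations containing $\xi_\vt$ (which the lemma's hypothesis excludes) is sound and fills in a detail the paper leaves implicit.
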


In view of Lemma \ref{l.a+bnc-pro}(2), $A^+(B_n)_n \cup \{\xi_\vt\}$ is a subsemigroup of $A^+(B_n)$. Further, we have the following lemma regarding $A^+(B_n)_n \cup \{\xi_\vt\}$.

\begin{lemma}\label{l.iso.bgn-ns}
The semigroup $A^+(B_n)_n \cup \{\xi_\vt\}$ is isomorphic to the semigroup $(B(S_n, n), +)$.
\end{lemma}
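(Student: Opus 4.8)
The plan is to write down an explicit bijection between $A^+(B_n)_n \cup \{\xi_\vt\}$ and $B(S_n, n) = ([n] \times S_n \times [n]) \cup \{\vt\}$, and then verify it respects the two multiplication tables. By Theorem \ref{t.affbn}, every $n$-support element of $A^+(B_n)$ has the form $(p, q; \sigma)$ with $p, q \in [n]$ and $\sigma \in S_n$, and conversely each such triple gives a distinct $n$-support map; so the natural candidate map $\varphi \colon A^+(B_n)_n \cup \{\xi_\vt\} \to B(S_n, n)$ is $(p, q; \sigma)\varphi = (p, \sigma, q)$ and $\xi_\vt \varphi = \vt$. This is clearly a bijection onto $B(S_n, n)$, since the counting in Theorem \ref{t.affbn} gives $|A^+(B_n)_n| = n! \cdot n^2 = |[n] \times S_n \times [n]|$.

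The substantive step is checking that $\varphi$ is a homomorphism, i.e. that $\bigl((p, q; \sigma) \circ (r, s; \tau)\bigr)\varphi = (p, \sigma, q) + (r, \tau, s)$ for all choices, and that products involving $\xi_\vt$ go to $\vt$. For the latter, $\xi_\vt \circ h = \xi_\vt$ for every $h$, and $h \circ \xi_\vt = \xi_\vt$ when $h \in A^+(B_n)_n$ (an $n$-support map composed on the left of a constant map at $\vt$ is again constant at $\vt$), matching the absorbing behaviour of $\vt$ in $B(S_n, n)$. For the main case, I would compute directly: the map $(p, q; \sigma)$ sends $(i, p) \mapsto (i\sigma, q)$ and kills everything off the ``column $p$'', while $(r, s; \tau)$ sends $(j, r) \mapsto (j\tau, s)$. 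Composing, $(i, p) \mapsto (i\sigma, q) \mapsto$ something nonzero precisely when $q = r$, in which case $(i, p) \mapsto (i\sigma\tau, s)$; if $q \ne r$ the composite is $\xi_\vt$. Thus $(p, q; \sigma) \circ (r, s; \tau)$ equals $(p, s; \sigma\tau)$ when $q = r$ and $\xi_\vt$ otherwise — exactly the definition of $(p, \sigma, q) + (r, \tau, s)$ in $B(S_n, n)$ (noting that Brandt semigroup addition here is written with the group operation $\sigma\tau$ in the middle coordinate). This is essentially the computation already recorded inside the proof of Lemma \ref{l.a+bnc-pro}(2), so it can be invoked rather than repeated in full.

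The only point requiring a little care is the direction of composition and the order of the group factor: since the paper writes arguments on the left, $\g(f \circ g) = (\g f)g$, so in the product $(p, q; \sigma) \circ (r, s; \tau)$ the permutation $\sigma$ is applied first and $\tau$ second, giving $\sigma\tau$ — and this must be made consistent with the order in which $B(S_n, n)$ multiplies its group coordinates in the displayed definition $(i, a, j) + (k, b, l) = (i, ab, l)$. Once the conventions are aligned, the homomorphism identity is immediate and, combined with the bijectivity noted above, completes the proof. I do not anticipate a genuine obstacle here; the ``hard part'' is merely bookkeeping with the two conventions, and the closure of $A^+(B_n)_n \cup \{\xi_\vt\}$ under $\circ$ has already been established via Lemma \ref{l.a+bnc-pro}(2).
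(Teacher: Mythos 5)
Your proposal is correct and is essentially the paper's proof: the paper simply exhibits the same assignment $(i,j;\sigma) \mapsto (i,\sigma,j)$, $\xi_\vt \mapsto \vt$ and asserts it is an isomorphism, while you spell out the bijectivity count and the homomorphism check (which, as you note, is the same composition computation recorded in the proof of Lemma~\ref{l.a+bnc-pro}(2)). No gaps; your extra care about the order of composition versus the group coordinate in $B(S_n,n)$ is exactly the bookkeeping the paper leaves implicit.
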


\begin{proof}
Note that the assignment $(i, j; \sigma) \mapsto (i, \sigma, j)$ and $\xi_\vt \mapsto \vt$, for all $i,j \in [n]$ and $\sigma \in S_n$, is an isomorphism.
\end{proof}

\begin{lemma}\label{l.gen.nsupp}
Let $\sigma$ be the cycle $(1\; 2 \cdots \; n)$ and $\tau$ be the transposition $(1\;  2)$ in $S_n$. The following are minimum generating subsets of the semigroups $A^+(B_n)_n \cup \{\xi_\vt\}$, for $n \ge 2$.
\begin{enumerate}
\item If $n \ge 3$, $\mathcal{P} = \{(1, 1; \sigma), (1, 2; \tau), (2, 3; id), \cdots (n - 1, n; id), (n, 1; id)\}$.
\item If $n = 2$, $\mathcal{P'} =\{(1, 2; \sigma), (2, 1; id)\}$.
\end{enumerate}
\end{lemma}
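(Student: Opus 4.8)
The plan is to pass to the Brandt semigroup $B(S_n,n)$ using the isomorphism of Lemma~\ref{l.iso.bgn-ns}, under which $\mathcal{P}$ corresponds to $\widehat{\mathcal{P}}=\{(1,\sigma,1),\,(1,\tau,2),\,(2,id,3),\,\ldots,\,(n-1,id,n),\,(n,id,1)\}$ and $\mathcal{P}'$ corresponds to $\{(1,\sigma,2),\,(2,id,1)\}$. It then suffices to show that $\widehat{\mathcal{P}}$ (resp.\ its $n=2$ analogue) generates $B(S_n,n)$ and that no subset of smaller cardinality does; note $|\widehat{\mathcal{P}}|=n+1$ for $n\ge 3$ and $|\mathcal{P}'|=2$.

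For the generating direction with $n\ge 3$ (the case $n=2$ being a short direct verification), I would first obtain the loop $(1,\tau,1)$ by forming $(1,\tau,2)+(2,id,3)+\cdots+(n-1,id,n)+(n,id,1)$. Since $(1,a,1)+(1,b,1)=(1,ab,1)$, the elements $(1,\pi,1)$ with $\pi\in S_n$ form a copy of $S_n$, and as $\sigma=(1\,2\cdots n)$ together with $\tau=(1\,2)$ generate $S_n$, every $(1,\pi,1)$ is obtained, in particular $(1,id,1)$. Then $(1,\tau,1)+(1,\tau,2)=(1,id,2)$, so together with $(2,id,3),\ldots,(n-1,id,n),(n,id,1)$ one has an $id$-labelled directed cycle through all of $[n]$; traversing suitable arcs of it produces every $(i,id,j)$. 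Finally every element is reached via $(i,\pi,j)=(i,id,1)+(1,\pi,1)+(1,id,j)$, and $\vt=(1,id,1)+(2,id,2)$. All of this is a routine, if careful, use of the addition rule of $B(S_n,n)$.

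For minimality, let $U$ be a generating set of $B(S_n,n)$ (one may assume $\vt\notin U$). Because the first coordinate of a non-zero sum of non-zero elements equals the first coordinate of its first summand, $U$ must contain an element in each of the $n$ ``rows'', whence $|U|\ge n$; for $n=2$ this already matches $|\mathcal{P}'|$. Suppose now $n\ge 3$ and, for contradiction, $|U|=n$. Then $U$ has exactly one element in each row and, by cardinality, exactly one in each column, so $U=\{(i,h_i,i\phi):i\in[n]\}$ for some permutation $\phi$ of $[n]$ and elements $h_i\in S_n$. Fix $i$ and let $d$ be the length of the $\phi$-orbit of $i$. Any non-zero sum of elements of $U$ lying in row $i$ is forced, summand by summand, to be $(i,h_i,i\phi)+(i\phi,h_{i\phi},i\phi^2)+\cdots$, and after $m$ summands its third coordinate is $i\phi^m$; hence it lies in column $i$ only when $d\mid m$, in which case it equals $(i,c_i^{\,m/d},i)$ with $c_i=h_i h_{i\phi}\cdots h_{i\phi^{d-1}}$. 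Thus $\{g\in S_n:(i,g,i)\in\langle U\rangle\}$ is contained in the cyclic subgroup $\langle c_i\rangle$, which is proper in $S_n$ since $S_n$ is non-cyclic for $n\ge 3$; so some $(i,g,i)$ fails to be generated, a contradiction. Therefore $|U|\ge n+1$, and combined with the first part, $\mathcal{P}$ (resp.\ $\mathcal{P}'$) is a minimum generating set.

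The step I expect to be the main obstacle is the minimality argument --- specifically, extracting from $|U|=n$ the rigid ``permutation pattern'' $\{(i,h_i,i\phi)\}$ and then isolating the cyclic-subgroup obstruction that forbids it. The generating direction, by contrast, is bookkeeping with the Brandt multiplication.
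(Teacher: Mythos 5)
Your proof is correct, but it takes a genuinely different route from the paper. The paper's proof is essentially a citation: after transporting the problem to $B(S_n,n)$ via Lemma~\ref{l.iso.bgn-ns}, it invokes Garba's result (\cite[Proposition 2.4]{a.garba94}) that from a minimum generating set $\{g_1,\ldots,g_r\}$ of a finite group $G$ the set $\{(1,g_1,1),\ldots,(1,g_{r-1},1),(1,g_r,2),(2,e,3),\ldots,(n-1,e,n),(n,e,1)\}$ of $r+n-1$ elements is a minimum generating set of $B(G,n)$, and applies it with the minimum generating set $\{\sigma,\tau\}$ of $S_n$. You instead reprove this special case from scratch: a hands-on check that $\widehat{\mathcal{P}}$ generates (the loop $(1,\tau,1)$, the copy of $S_n$ at position $(1,1)$, the $id$-labelled cycle, then arbitrary $(i,\pi,j)$ and $\vt$), and a direct minimality argument showing that a generating set of size $n$ must have the ``permutation pattern'' $\{(i,h_i,i\phi)\}$, whose obtainable diagonal elements at $(i,i)$ all lie in the cyclic subgroup $\langle c_i\rangle$, which is impossible since $S_n$ is non-cyclic for $n\ge 3$. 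Both arguments are sound; the paper's is shorter and leans on the literature on ranks of Brandt semigroups over groups, while yours is self-contained and in effect re-derives the relevant instance of Garba's theorem. One small presentational point: ``exactly one in each column'' does not follow from cardinality alone but from the dual of your row argument (the third coordinate of a non-zero sum equals that of its last summand); you clearly have this in hand, so it is a nit rather than a gap.
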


\begin{proof}
Given a minimum generating set $\{g_1, \ldots, g_r\}$ of a finite group $G$ with the identity element $e$, by \cite[Proposition 2.4]{a.garba94},
the set \[ \{(1, g_1, 1), \ldots(1, g_{r-1}, 1), (1, g_r, 2),(2, e, 3 ), \ldots, (n -1, e, n), (n, e, 1)\}\]  of $r + n - 1$ elements is a minimum generating set of Brandt semigroup $B(G, n)$. For $n \ge 2$, since $\{\sigma, \tau\}$ is a minimum generating subset of the symmetric group $S_n$, by Lemma \ref{l.iso.bgn-ns}, the result follows.
\end{proof}

\begin{theorem}\label{t.r2-a+bnc}
For $n \ge 3$, $r_2(A^+(B_n))  =  n + 3$.
\end{theorem}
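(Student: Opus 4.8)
The plan is to prove the two inequalities $r_2(A^+(B_n)) \ge n+3$ and $r_2(A^+(B_n)) \le n+3$ separately.

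\emph{Lower bound.} Let $U$ be an arbitrary generating set of $A^+(B_n)$ and set $U_1 = U \cap A^+(B_n)_1$, $U_n = U \cap A^+(B_n)_n$ and $U_{n^2+1} = U \cap A^+(B_n)_{n^2+1}$. Since $1 < n < n^2+1$ when $n \ge 2$, these three subsets of $U$ are pairwise disjoint, and by Corollary \ref{c.sis-fs-gset} both $U_1$ and $U_{n^2+1}$ are nonempty. The crux is the estimate $|U_n| \ge n+1$. Take any $f \in A^+(B_n)_n$ and write $f = g_1 g_2 \cdots g_k$ with each $g_i \in U$. None of the $g_i$ is $\xi_\vt$ (a product with a factor $\xi_\vt$ is a constant map, while $f$ is not), so Lemma \ref{l.a+bnc-pro}(2) gives $g_i \in A^+(B_n)_n$, hence $g_i \in U_n$, for every $i$; thus $A^+(B_n)_n \subseteq \langle U_n\rangle$. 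On the other hand $\langle U_n\rangle \subseteq A^+(B_n)_n \cup \{\xi_\vt\}$, again by Lemma \ref{l.a+bnc-pro}(2), and $A^+(B_n)_n$ is not closed under composition (for instance $(1,1;id)\circ(2,2;id) = \xi_\vt$), so the subsemigroup $\langle U_n\rangle$ is strictly larger than $A^+(B_n)_n$ and therefore equals $A^+(B_n)_n \cup \{\xi_\vt\}$. Hence $U_n$ generates $A^+(B_n)_n \cup \{\xi_\vt\}$, and Lemma \ref{l.gen.nsupp}(1) yields $|U_n| \ge n+1$. Consequently $|U| \ge |U_1| + |U_n| + |U_{n^2+1}| \ge 1 + (n+1) + 1 = n+3$.

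\emph{Upper bound.} It is enough to exhibit a generating set of size $n+3$. I propose $\mathcal{Q} = \mathcal{P} \cup \{\sis{(1,1)}{(1,1)},\, \xi_{(1,1)}\}$, with $\mathcal{P}$ as in Lemma \ref{l.gen.nsupp}(1); since the two adjoined maps have support sizes $1$ and $n^2+1$, they lie outside $\mathcal{P}$ and $|\mathcal{Q}| = n+3$. By Lemma \ref{l.gen.nsupp}(1), $\langle\mathcal{P}\rangle = A^+(B_n)_n \cup \{\xi_\vt\}$, so $\langle\mathcal{Q}\rangle$ already contains all the $n$-support maps and $\xi_\vt$; in view of Theorem \ref{t.class.a+bn} it remains to show that $\langle\mathcal{Q}\rangle$ contains every singleton-support map and every constant map $\xi_c$ with $c \ne \vt$. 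For this I would evaluate the relevant composites. One checks $(p,1;\sigma) \circ \sis{(1,1)}{(1,1)} = \sis{(1\sigma^{-1},p)}{(1,1)}$ for all $p \in [n]$, $\sigma \in S_n$, which as $p$ and $\sigma$ vary gives every $\sis{(k,p)}{(1,1)}$; next $\sis{(k,p)}{(1,1)} \circ (1,q;\sigma') = \sis{(k,p)}{(1\sigma',q)}$, which as $q$ and $\sigma'$ vary gives every singleton-support map; finally $\xi_{(1,1)} \circ (1,q;\sigma) = \xi_{(1\sigma,q)}$, which as $q$ and $\sigma$ vary gives every non-$\vt$ constant map. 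Thus $\langle\mathcal{Q}\rangle = A^+(B_n)$ and $r_2(A^+(B_n)) \le n+3$.

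Together these bounds give $r_2(A^+(B_n)) = n+3$. The two steps that need care are, in the lower bound, the observation that the closure of any generating subset of $A^+(B_n)_n$ is forced to absorb $\xi_\vt$, so that Lemma \ref{l.gen.nsupp}(1) indeed forces $|U_n| \ge n+1$ rather than merely $|U_n| \ge n$; and, in the upper bound, choosing the extra singleton-support and full-support generators so that the above chain of composites actually sweeps out all of $A^+(B_n)_1$ together with all non-$\vt$ constant maps. I expect the composition bookkeeping in the upper bound to be the main, though wholly routine, computational obstacle.
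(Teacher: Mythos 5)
Your proof is correct and takes essentially the same approach as the paper: the same generating set $\mathcal{Q} = \mathcal{P} \cup \{\xi_{(1,1)}, \sis{(1,1)}{(1,1)}\}$ for the upper bound, and the same counting via Corollary \ref{c.sis-fs-gset} and Lemma \ref{l.gen.nsupp} for the lower bound. Your only addition is to spell out why any generating set must contain at least $n+1$ maps of $n$-support (namely that $\langle U_n\rangle$ is forced to absorb $\xi_\vt$), a point the paper leaves implicit.
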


\begin{proof}
We prove that \[ \mathcal{Q} = \mathcal{P} \cup \left\{\xi_{(1, 1)},\; \sis{(1, 1)}{(1, 1)}\right\}\] is a minimum generating set of $A^+(B_n)$ so that the result follows.

By Lemma \ref{l.gen.nsupp}(1), $\mathcal{Q}$ generates all $n$-support maps and the zero map in $A^+(B_n)$.  For $f \in A^+(B_n)$, if $f =$ $\sis{(k, l)}{(p, q)}$, then write \[\sis{(k, l)}{(p, q)} = (l, 1; \sigma)\sis{(1, 1)}{(1, 1)}(1, q; \rho)\] or if $f = \xi_{(p, q)}$, then write \[\xi_{(p, q)} = \xi_{(1, 1)}(1, q; \rho),\]  where $k \sigma = 1$ and $1\rho = p$,  so that $f \in \langle \mathcal{Q} \rangle$. Hence, by Theorem \ref{t.class.a+bn}, we have $\langle\mathcal{Q} \rangle =  A^+(B_n)$.

Let $V$ be a generating subset of $A^+(B_n)$. By Lemma \ref{l.a+bnc-pro}(2) and Lemma \ref{l.gen.nsupp}, $V$ must contain at least $n + 1$ elements of $n$-support to generate all $n$-support elements in $A^+(B_n)$.  Further, by Corollary \ref{c.sis-fs-gset}, $V$ contains at least a singleton support element and a full support element so that $|V| \ge n + 3$. Hence, the result.
\end{proof}

\begin{theorem}\label{t.r2-a+b2c}
$r_2(A^+(B_2))  = 4$.
\end{theorem}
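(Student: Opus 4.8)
The plan is to mirror the proof of Theorem \ref{t.r2-a+bnc} but with the base case $n = 2$, where the symmetric group $S_2$ is generated by a single element and Lemma \ref{l.gen.nsupp}(2) gives a minimum generating set $\mathcal{P}' = \{(1, 2; \sigma), (2, 1; id)\}$ of size $2$ for $A^+(B_2)_2 \cup \{\xi_\vt\}$. So I would first show that
\[ \mathcal{Q}' = \mathcal{P}' \cup \left\{\xi_{(1, 1)},\; \sis{(1, 1)}{(1, 1)}\right\} \]
generates $A^+(B_2)$, and then argue that no generating set can be smaller than $4$.

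For the upper bound, by Lemma \ref{l.gen.nsupp}(2) the set $\mathcal{P}'$ already generates all $n$-support maps and $\xi_\vt$. It remains to produce the singleton support maps $\sis{(k, l)}{(p, q)}$ and the constant maps $\xi_{(p, q)}$ (for $k, l, p, q \in [2]$), together with $\xi_\vt$ if it were not already obtained. Exactly as in Theorem \ref{t.r2-a+bnc}, I would write $\sis{(k, l)}{(p, q)} = (l, 1; \sigma_1)\sis{(1, 1)}{(1, 1)}(1, q; \rho)$ where $\sigma_1, \rho \in S_2$ are chosen so that $k\sigma_1 = 1$ and $1\rho = p$, and $\xi_{(p, q)} = \xi_{(1, 1)}(1, q; \rho)$ with $1\rho = p$; the needed $n$-support factors $(l, 1; \sigma_1)$ and $(1, q; \rho)$ lie in $\langle \mathcal{P}' \rangle$. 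By Theorem \ref{t.class.a+bn} this exhausts $A^+(B_2)$, so $\langle \mathcal{Q}' \rangle = A^+(B_2)$ and $r_2(A^+(B_2)) \le 4$.

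For the lower bound, let $V$ be any generating subset of $A^+(B_2)$. By Lemma \ref{l.a+bnc-pro}(2) the $n$-support elements of $V$ must generate all $n$-support elements of $A^+(B_2)$, i.e.\ (via the isomorphism of Lemma \ref{l.iso.bgn-ns}) they must generate $B(S_2, 2)$, whose lower rank is $2$ by Lemma \ref{l.gen.nsupp}(2); hence $V$ contains at least $2$ elements of $n$-support. By Corollary \ref{c.sis-fs-gset}, $V$ also contains at least one singleton support element and at least one full support element, and these three ``types'' (the singleton support, the full support, and the $n$-support elements) are disjoint, giving $|V| \ge 2 + 1 + 1 = 4$. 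Combining the two bounds yields $r_2(A^+(B_2)) = 4$.

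I expect the argument to be essentially routine, with the only genuine care needed in the lower bound: one must be sure that the ``at least $2$'' for $n$-support elements and the ``at least $1 + 1$'' for the singleton and full support elements are counting disjoint parts of $V$, which they are since $A^+(B_2)_1$, $A^+(B_2)_{n^2+1}$ (the full support constant maps) and $A^+(B_2)_2$ are pairwise disjoint by Theorem \ref{t.class.a+bn}. The small $n = 2$ case also means one should double-check that $S_2$ really does need its single generator $\sigma$ and that the factorizations above remain valid when only the two permutations of $[2]$ are available; this is immediate but worth a line.
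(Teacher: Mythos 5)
Your proposal is correct and follows essentially the same route as the paper, which proves this case exactly by noting that $\mathcal{P'} \cup \{\xi_{(1, 1)},\; \sis{(1, 1)}{(1, 1)}\}$ is a minimum generating set along the lines of the proof of Theorem \ref{t.r2-a+bnc}. Your write-up simply spells out the factorizations and the counting argument (via Lemma \ref{l.a+bnc-pro}(2), Lemma \ref{l.gen.nsupp}(2) and Corollary \ref{c.sis-fs-gset}) that the paper leaves implicit.
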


\begin{proof}
In the similar lines of the proof of Theorem \ref{t.r2-a+bnc}, note that the set $\mathcal{P'} \cup \left\{\xi_{(1, 1)},\; \sis{(1, 1)}{(1, 1)}\right\}$ is a minimum generating set of the semigroup $A^+(B_2)$.
\end{proof}

\section{Intermediate and upper rank}

In this section, we will only provide lower bounds for intermediate and upper ranks of $A^+(B_n)$. In view of Lemma \ref{l.iso.bgn-ns}, we shall rely on some known lower bounds of respective ranks for $B(G, n)$. First we recall the required results and proceed to give the lower bounds in theorems \ref{t.l-int-rank} and \ref{t.l-upp-rank}.

\begin{theorem}[\cite{p.jdm05}]\label{t.ind-gen-Bgn}
Let $X$ be an independent generating set of maximum cardinality in a finite group $G$ with identity element $e$  and $\{I, J\}$ be a partition of the set $[n]$ such that $|I| = \left\lceil n /2 \right\rceil \ $  and $|J| = \left\lfloor n /2 \right\rfloor$. Then, in $B(G, n)$,
\begin{enumerate}
\item $\{(2, e, 3), (3, e, 4), \ldots, (n-1, e, n), (n, e, 1)\} \cup \{(1, x, 2)\; |\; x \in X\}$ is an independent generating set, and
\item $\{(i, e, i)\; |\; i \in [n]\} \cup \{(i, g, j)\; |\; i \in I, j \in J, g \in G\}$ is an independent set.
\end{enumerate}
\end{theorem}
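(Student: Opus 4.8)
The plan is to prove the two parts separately, in both cases relying on the description of non-zero products in $B(G,n)$: a product $(i_1,a_1,j_1)(i_2,a_2,j_2)\cdots(i_m,a_m,j_m)$ is non-zero if and only if $j_t = i_{t+1}$ for all $t$, in which case it equals $(i_1,\,a_1a_2\cdots a_m,\,j_m)$.

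\emph{Part (1).} Write $T = \{(i,e,i+1) : 2\le i\le n-1\}\cup\{(n,e,1)\}\cup\{(1,x,2) : x\in X\}$. For the generating claim I would first chain $(1,x,2),(2,e,3),\ldots,(k-1,e,k)$ to obtain $(1,x,k)$ for every $k\in[n]$ (going all the way through $(n,e,1)$ when $k=1$); the elements $(1,x,1)$, $x\in X$, then generate $\{(1,a,1) : a\in G\}$ because $\langle X\rangle = G$, and right multiplication by $(1,x,k)$ yields $\{(1,b,k) : b\in G\}$ for each $k$. Finally $(i,e,1)=(i,e,i+1)\cdots(n,e,1)$, so $(i,e,1)(1,a,j)=(i,a,j)$ for all $i,j,a$, while $\vt$ is obtained as any incompatible product; hence $\langle T\rangle = B(G,n)$. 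For independence, each $(i,e,i+1)$ with $2\le i\le n$ (reading $(n,e,n+1)$ as $(n,e,1)$) is the \emph{only} member of $T$ with first coordinate $i$, so after its deletion no generator can begin a non-zero product with first coordinate $i$; thus it lies outside the subsemigroup generated by the rest. For an element $(1,x,2)$, I would examine a putative factorisation over $T\setminus\{(1,x,2)\}$: its first factor must be some $(1,x',2)$ with $x'\in X\setminus\{x\}$, and from there the path is forced along $2\to3\to\cdots\to n\to1$ through the identity-labelled chain before any further $(1,x'',2)$ can appear, so the product has the form $(1,\,x'_1x'_2\cdots x'_k,\,2)$ with every $x'_t\in X\setminus\{x\}$. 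Independence of $X$ in $G$ gives $x\notin\langle X\setminus\{x\}\rangle$, a contradiction.

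\emph{Part (2).} Put $U = \{(i,e,i) : i\in[n]\}\cup\{(i,g,j) : i\in I,\ j\in J,\ g\in G\}$. The key observation is that every non-zero product of elements of $U$ again lies in $U$: think of $(k,e,k)$ as a loop at vertex $k$ and each $(i,g,j)$ with $i\in I$, $j\in J$ as an edge from $I$ to $J$. Since $I$ and $J$ are disjoint and no edge leaves $J$, a non-zero path can contain at most one such edge, with only loops before and after it; the loops are labelled by $e$, so the product equals that single edge $(i,g,j)$ — or a single loop $(k,e,k)$, or $\vt$. Consequently, for any $a\in U$ we have $\langle U\setminus\{a\}\rangle\subseteq(U\setminus\{a\})\cup\{\vt\}$, which does not contain $a$; hence $U$ is independent. (The sizes $|I|=\lceil n/2\rceil$, $|J|=\lfloor n/2\rfloor$ play no role in independence; they serve only to maximise $|U| = n + |I|\,|J|\,|G|$.)

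I expect the main obstacle to be the independence of the elements $(1,x,2)$ in part (1): one must argue carefully that the chain forces the path to traverse the entire cycle, so that every return to first coordinate $1$ contributes only the identity of $G$ together with a fresh generator from $X\setminus\{x\}$, thereby reducing the statement to the independence of $X$ in $G$. Part (2), by contrast, is routine once the loop/edge picture of $U$ is in place.
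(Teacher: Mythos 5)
Your proof is correct, but note that the paper itself offers no argument for this statement: it is quoted verbatim from Mitchell \cite{p.jdm05}, so there is no internal proof to compare against, and your write-up is in effect a self-contained reconstruction of the path-style argument behind that citation. Both halves check out against the multiplication rule of $B(G,n)$: in (1), each chain element $(i,e,i+1)$ (and $(n,e,1)$) is the unique generator with first coordinate $i$, so its removal kills every non-zero product starting at $i$; and a non-zero product over the remaining generators equal to $(1,x,2)$ is forced to traverse the cycle $2\to 3\to\cdots\to n\to 1$ between successive factors $(1,x',2)$, so its group component is a positive product of elements of $X\setminus\{x\}$, which in a finite group lies in the subgroup $\langle X\setminus\{x\}\rangle$ — contradicting independence of $X$ (you might state explicitly that finiteness of $G$ is what lets you replace the sub-semigroup generated by $X\setminus\{x\}$ with the subgroup). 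In (2), since $I\cap J=\varnothing$ and every edge runs from $I$ to $J$, a non-zero product contains at most one edge flanked by identity loops, so $\langle U\setminus\{a\}\rangle\subseteq (U\setminus\{a\})\cup\{\vartheta\}$, exactly as you say. Your argument also makes visible two facts the bare statement obscures: maximality of $X$ is irrelevant to (1) (any independent generating set of $G$ works), and the prescribed sizes of $I$ and $J$ play no role in the independence in (2), serving only to maximise $n+|I||J||G|$, which is the quantity used later in Theorem \ref{t.l-upp-rank}.
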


\begin{theorem}[\cite{a.whiston02}]
The set $\mathcal{T} = \{(1\;  2),(2 \; 3), \cdots, (n-1 \;  n)\}$ of transpositions is an independent generating set of maximum cardinality in $S_n$ and hence $r_3(S_n) = n - 1$.
\end{theorem}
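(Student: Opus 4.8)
The plan is to prove the two halves separately: the lower bound $r_3(S_n)\ge n-1$, which is elementary, and the upper bound $r_3(S_n)\le n-1$, which is the substantial content (this is Whiston's theorem). For the lower bound, first recall the classical fact that $\mathcal{T}$ generates $S_n$ -- the adjacent transpositions are the standard Coxeter generators of $S_n$, and surjectivity of $\langle\mathcal{T}\rangle$ is verified by a straightforward ``bubble sort'' induction expressing any permutation as a product of adjacent transpositions. To see that $\mathcal{T}$ is independent, fix $k$ with $1\le k\le n-1$ and compute
\[
\langle\, \mathcal{T}\setminus\{(k\; k+1)\}\,\rangle = S_{\{1,\ldots,k\}}\times S_{\{k+1,\ldots,n\}},
\]
the internal direct product of the symmetric groups on the two blocks, the two factors being generated by $(1\;2),\ldots,(k-1\;k)$ and by $(k+1\;k+2),\ldots,(n-1\;n)$ respectively. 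This subgroup preserves the partition $\{\{1,\ldots,k\},\{k+1,\ldots,n\}\}$, whereas $(k\; k+1)$ carries a point of the first block into the second; hence $(k\; k+1)\notin\langle\mathcal{T}\setminus\{(k\; k+1)\}\rangle$. Thus $\mathcal{T}$ is an independent generating set of cardinality $n-1$, and $r_3(S_n)\ge n-1$.

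For the upper bound I would show that every independent generating set $U=\{u_1,\ldots,u_m\}$ of $S_n$ satisfies $m\le n-1$. The key initial observation is that $U$ produces a strictly increasing chain of subgroups with an extra structural feature: putting $G_0=\{1\}$ and $G_i=\langle u_1,\ldots,u_i\rangle$, each inclusion $G_{i-1}<G_i$ is proper (since $u_i\notin\langle U\setminus\{u_i\}\rangle\supseteq G_{i-1}$) and $G_i=\langle G_{i-1},u_i\rangle$ is obtained from its predecessor by adjoining a single element. It then suffices to bound the length of any such chain in $S_n$ by $n-1$, which I would attempt by induction on $n$, splitting the chain according to how its terms act on $[n]$. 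Let $j$ be the least index with $G_j$ transitive and $\ell\ge j$ the least index with $G_\ell$ primitive. On the bottom segment the groups $G_0<\cdots<G_j$ are intransitive, hence each lies in a Young subgroup $S_a\times S_b$ with $a+b=n$ coming from an orbit; on the middle segment the groups $G_j<\cdots<G_\ell$ are transitive but imprimitive, hence each lies in a wreath product $S_a\wr S_b$ with $ab=n$. In both cases one feeds the induction hypothesis into the direct factors (respectively into the base group and the top group) and arranges the arithmetic so that the contributions total at most $n-1$; for the imprimitive step the relevant count is $b(a-1)+(b-1)=ab-1=n-1$. On the top segment $G_\ell\le\cdots\le G_m=S_n$ every term is a primitive subgroup of $S_n$, and here one invokes the consequences of the classification of finite simple groups -- the Praeger--Saxl and Cameron order bounds, which force a primitive subgroup of $S_n$ other than $A_n$ and $S_n$ to have order at most (say) $4^n$, together with the explicit description of the maximal subgroups of $S_n$ and $A_n$ -- to conclude that this segment contributes at most one further step (in effect, only $A_n<S_n$). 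Adding the three estimates yields $m\le n-1$, hence $r_3(S_n)=n-1$.

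The main obstacle is the top, primitive, segment: bounding chains of primitive permutation groups terminating in $S_n$ genuinely relies on CFSG, and this is where the real work of Whiston's proof lies. A secondary, more bookkeeping, difficulty is that independence is not visibly inherited under restriction to a subgroup or projection along an orbit or block decomposition, so the induction has to be run in terms of the associated subgroup chains (with the ``one extra generator'' property) rather than the independent sets themselves, and one must track carefully how such a chain interacts with orbit stabilizers and with the base group of a wreath product. Since the present paper needs only the conclusion $r_3(S_n)=n-1$, we will simply quote Whiston's theorem and use it as a black box when applying Theorem~\ref{t.ind-gen-Bgn} and Lemma~\ref{l.iso.bgn-ns}.
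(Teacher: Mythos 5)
The paper does not prove this statement at all---it is quoted as a known result from the literature---so on the substantive point your proposal ends up in exactly the same place: the upper bound $r_3(S_n)\le n-1$ is Whiston's theorem, whose proof (as you correctly outline, via subgroup chains split into intransitive, imprimitive and primitive segments, with CFSG needed for the primitive part) is taken as a black box both by you and by the paper. What you add beyond the paper is a complete and correct proof of the easy half: the adjacent transpositions generate $S_n$, and deleting $(k\; k+1)$ leaves precisely the Young subgroup $S_{\{1,\ldots,k\}}\times S_{\{k+1,\ldots,n\}}$, which preserves the two blocks while $(k\; k+1)$ does not, so $\mathcal{T}$ is an independent generating set of cardinality $n-1$ and $r_3(S_n)\ge n-1$. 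This is sound, and your sketch of Whiston's argument is a fair description of where the real difficulty lies, so there is no gap given that you explicitly invoke the cited theorem for the hard direction. One minor attribution point: the $S_n$ result is Whiston's paper \cite{a.whiston00} (the reference \cite{a.whiston02} attached to the statement is Whiston--Saxl on $\mathrm{PSL}_2(q)$), so the black box you quote should point to the former, as the paper's introduction in fact does.
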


Now we prove the following lemma regarding an independent generating subset of $A^+(B_n)$.

\begin{lemma}\label{l.ind-gen}
Any independent generating subset of $A^+(B_n)$ contains
\begin{enumerate}
\item exactly one singleton support element, and
\item exactly one full support element.
\end{enumerate}
\end{lemma}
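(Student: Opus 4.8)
The plan is to argue by contradiction in each part, exploiting the multiplicative closure properties recorded in Lemma~\ref{l.a+bnc-pro} together with the fact (Corollary~\ref{c.sis-fs-gset}) that every generating set already contains \emph{at least} one singleton support element and \emph{at least} one full support element. So what must be shown is that an independent generating set cannot contain two or more of either kind. Fix an independent generating set $U$ of $A^+(B_n)$.

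For part~(1), suppose $U$ contains two distinct singleton support maps, say $s_1 = \sis{(k_1,l_1)}{(p_1,q_1)}$ and $s_2 = \sis{(k_2,l_2)}{(p_2,q_2)}$. The idea is to show that each of $s_1,s_2$ lies in the subsemigroup generated by the other elements of $U$, contradicting independence. Concretely, I would write $s_2$ as a product $a \circ s_1 \circ b$, where $a$ and $b$ are suitable $n$-support maps of the form $(l_2,1;\sigma)$ and $(1,q_2;\rho)$ chosen so that the supports line up (pick $\sigma$ with $k_2\sigma = k_1$ and $\rho$ with $l_1\rho$... more carefully, one conjugates so that $s_1$'s nonzero input $(k_1,l_1)$ is hit and its output is redirected to $(p_2,q_2)$). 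The point is that $a$ and $b$ are $n$-support maps, and by Lemma~\ref{l.gen.nsupp} combined with Lemma~\ref{l.a+bnc-pro}(2) the full collection of $n$-support maps is generated by the $n$-support elements of $U$ together with no singleton or constant maps — in particular $a,b \in \langle U \setminus \{s_1,s_2\}\rangle$. Hence $s_2 \in \langle U \setminus \{s_2\}\rangle$, the desired contradiction. (One should check the degenerate cases where the required $\sigma$ or $\rho$ forces $a$ or $b$ to collapse a product to $\xi_\vt$; choosing the permutations appropriately avoids this.)

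For part~(2), the strategy is the same with constant maps. Suppose $U$ contains two distinct constant maps $\xi_{c_1}$ and $\xi_{c_2}$. If $c_1 = (p_1,q_1) \ne \vt$ and $c_2 = (p_2,q_2)$, one writes $\xi_{c_2} = \xi_{c_1} \circ (q_1, q_2; \rho)$ with $1\rho$ chosen appropriately (or, if $c_2 = \vt$, note $\xi_\vt = \xi_{c_1}\circ t$ for any $t$ with $c_1 \notin \mathrm{supp}(t)$, e.g.\ a suitable singleton support map already in $U$, or a product of $n$-support maps evaluating to $\xi_\vt$). Again the intervening factor is an $n$-support map lying in $\langle U \setminus \{\xi_{c_1},\xi_{c_2}\}\rangle$ by Lemma~\ref{l.gen.nsupp}, so $\xi_{c_2}$ is redundant — contradiction. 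One subtlety: if $U$ happened to contain only $n$-support elements and constant maps, then by Lemma~\ref{l.a+bnc-pro}(1)--(2) it could not generate any singleton support map, contradicting that $U$ generates $A^+(B_n)$; so $U$ does contain a singleton support element, which is useful in handling the $c_2 = \vt$ case.

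The main obstacle I anticipate is the bookkeeping in the conjugation argument: one must verify that the permutations $\sigma,\rho \in S_n$ used to reposition supports can always be chosen so that none of the intermediate products degenerates to $\xi_\vt$ (which would break the factorization), and that the resulting $n$-support maps genuinely lie in the subsemigroup generated by $U \setminus \{\text{the two removed maps}\}$ rather than requiring one of the removed maps. The first issue is handled by the freedom to pick any permutation with a prescribed value at a single point; the second rests squarely on Lemma~\ref{l.gen.nsupp} and Lemma~\ref{l.a+bnc-pro}(2), which together guarantee that the $n$-support part of $A^+(B_n)$ is generated using no singleton or constant maps at all. Everything else is a routine application of the support calculus already set up in Section~1 and Lemma~\ref{l.a+bnc-pro}.
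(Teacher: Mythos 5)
Your proposal is correct and takes essentially the same route as the paper: express the second singleton (resp.\ full) support map as a product of the first one with suitable $n$-support maps, and use Lemma~\ref{l.a+bnc-pro}(2) (plus the fact that $U$ generates) to conclude those $n$-support factors already lie in the subsemigroup generated by $U$ minus the removed maps, contradicting independence. The only cosmetic slips are the intermediate indices (the connecting maps should be $(l_2, l_1; \sigma)$ with $k_2\sigma = k_1$ and $(q_1, q_2; \rho)$ with $p_1\rho = p_2$, as you yourself hedge), the superfluous appeal to Lemma~\ref{l.gen.nsupp} where Lemma~\ref{l.a+bnc-pro}(2) alone does the work, and the case $c_2 = \vt$, which is moot since $\xi_\vt$ is not of full support.
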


\begin{proof} In view of Corollary \ref{c.sis-fs-gset}, let $f$ =  $\sis{(k, l)}{(p, q)}$ and $g = \xi_{(p, q)}$ are in $U$.
\begin{enumerate}
\item Suppose there is another singleton support map, say $f'$ = $\sis{(s, t)}{(u, v)}  \in U$. Consider the $n$-support maps $h = (t, l;  \sigma)$ with $s \sigma = k$ and $h' = (q, v;  \tau)$ with $p \tau = u$. Since $U$ is a generating set, we have $h, h' \in \langle U \setminus \{f'\} \rangle$.  Now observe that $f' = hfh'$ so that $f'  \in \langle U \setminus \{f'\} \rangle$; a contradiction to $U$ is an independent set.
\item Suppose there is another full support map, say $g'$ = $\xi_{(u, v)}  \in U$. Consider the $n$-support map $h' = (q, v;  \tau)$ with $p \tau = u$ and note that  $h' \in \langle U \setminus \{g'\} \rangle$ (cf. Lemma \ref{l.a+bnc-pro}(2)). However, since $g' = gh'$, we have $g'  \in \langle U \setminus \{g'\} \rangle$; a contradiction to $U$ is an independent set.
\end{enumerate}
\end{proof}

\begin{theorem}\label{t.l-int-rank}
For $n \ge 2$, $r_3(A^+(B_n)) \ge 2n$.
\end{theorem}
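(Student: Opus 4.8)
The plan is to construct an explicit independent generating set of $A^+(B_n)$ of cardinality $2n$. By Lemma~\ref{l.ind-gen}, such a set must contain exactly one singleton support element and exactly one full support element; the remaining $2n-2$ elements must therefore all be $n$-support maps, and by Lemma~\ref{l.a+bnc-pro}(2) together with Lemma~\ref{l.gen.nsupp} they must generate all of $A^+(B_n)_n \cup \{\xi_\vt\}$. Since $r_2(A^+(B_n)) = n+3$ and $r_3(A^+(B_n)) = n-1$ for $S_n$ lifts to an independent generating set of size $n + (n-1) - 1 + \cdots$, the natural idea is to combine the independent generating set of $B(S_n,n)$ coming from Theorem~\ref{t.ind-gen-Bgn}(1) (using the maximal independent generating set $\mathcal{T}$ of transpositions in $S_n$, which has size $n-1$) with one singleton support map and one full support map.

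Concretely, via the isomorphism of Lemma~\ref{l.iso.bgn-ns}, I would take the $n$-support part to be $\{(2,3;id),(3,4;id),\ldots,(n,1;id)\} \cup \{(1,2;\tau_i)\mid \tau_i \in \mathcal{T}\}$, where $\mathcal{T} = \{(1\;2),(2\;3),\ldots,(n-1\;n)\}$; this has $(n-1) + (n-1) = 2n-2$ elements and, by Theorem~\ref{t.ind-gen-Bgn}(1), is an independent generating set of $A^+(B_n)_n \cup \{\xi_\vt\}$. To this I adjoin $g = \xi_{(1,1)}$ and $f = \sis{(1,1)}{(1,1)}$, obtaining a set $U$ of size $2n$. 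Generation of all of $A^+(B_n)$ follows exactly as in the proof of Theorem~\ref{t.r2-a+bnc}: the singleton support maps are recovered as $\sis{(k,l)}{(p,q)} = (l,1;\sigma)\,\sis{(1,1)}{(1,1)}\,(1,q;\rho)$ and the constants as $\xi_{(p,q)} = \xi_{(1,1)}\,(1,q;\rho)$, and these $n$-support coefficients lie in the subsemigroup generated by the $n$-support part of $U$.

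The main work is verifying independence of $U$. For each $n$-support generator $w \in U$, I must show $w \notin \langle U \setminus \{w\}\rangle$: here I would argue that any product from $U\setminus\{w\}$ that lands in $A^+(B_n)_n$ must use only the other $n$-support elements (since mixing in $f$ or $g$ drops the support below $n$, by Lemma~\ref{l.a+bnc-pro}(1)–(2)), and then invoke the independence half of Theorem~\ref{t.ind-gen-Bgn}(1) in $B(S_n,n)$ transported through Lemma~\ref{l.iso.bgn-ns}. For $f = \sis{(1,1)}{(1,1)}$ and $g = \xi_{(1,1)}$, I would use Lemma~\ref{l.a+bnc-pro}: a product yielding a singleton support element must contain $f$ as a factor (it is the only singleton support element of $U$), so $f \notin \langle U\setminus\{f\}\rangle$; similarly a product equal to a constant map must have its last factor a constant, and $g$ is the only one in $U$, so $g \notin \langle U\setminus\{g\}\rangle$. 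The expected obstacle is the independence of the individual $n$-support generators: one must be careful that, even though $f$ and $g$ are available, no product involving them can equal a given $(1,2;\tau_i)$ or $(i,i+1;id)$ — this is where the support-cardinality bookkeeping of Lemma~\ref{l.a+bnc-pro} does the essential work, reducing the question cleanly to the already-established independence statement in $B(S_n,n)$.
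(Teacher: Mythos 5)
Your construction is exactly the set $\mathcal{X}$ used in the paper, and your verification (generation as in the proof of Theorem~\ref{t.r2-a+bnc}, independence by using Lemma~\ref{l.a+bnc-pro} to confine $n$-support products to the $n$-support generators and then invoking Theorem~\ref{t.ind-gen-Bgn}(1) via Lemma~\ref{l.iso.bgn-ns}) is essentially the paper's argument, just spelled out in more detail. One small slip: a product equal to a nonzero constant need not have its \emph{last} factor constant (e.g. $\xi_{(1,1)}(1,q;\rho)=\xi_{(p,q)}$); the correct point, which Lemma~\ref{l.a+bnc-pro}(1) that you cite already gives, is that some factor must be of full support, and $\xi_{(1,1)}$ is the only such element of your set.
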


\begin{proof}
We observe that the set  $\mathcal{X} = \mathcal{X}' \cup \left\{\xi_{(1 , 1)},\; \sis{(1, 1)}{(1, 1)} \right\}$, where
\[\mathcal{X}' = \{(2, 3; id), (3, 4; id), \ldots, (n-1, n; id), (n, 1; id)\} \cup \{(1, 2; \sigma)\; |\; \sigma \in \mathcal{T}\}\]
is an independent generating set in $A^+(B_n)$ so that $r_3(A^+(B_n)) \ge |\mathcal{X}| = 2n$. By Theorem \ref{t.ind-gen-Bgn}(1) and Lemma \ref{l.iso.bgn-ns}, the set $\mathcal{X}'$ generates all $n$-support maps and the zero map in $A^+(B_n)$. Now, in the similar lines of proof of Theorem \ref{t.r2-a+bnc}, one can prove that $\langle \mathcal{X} \rangle = A^+(B_n)$. Further, in view of Lemma \ref{l.ind-gen} and Theorem \ref{t.ind-gen-Bgn}(1), $\mathcal{X}$ is an independent subset in $A^+(B_n)$.
\end{proof}

Though Theorem \ref{t.l-int-rank} gives us a lower bound for upper rank of $A^+(B_n)$, in the following theorem we provide a better lower bound for $r_4(A^+(B_n))$.

\begin{theorem}\label{t.l-upp-rank}
For $n \ge 2$, $r_4(A^+(B_n)) \ge n! \left\lfloor n^2 / 4 \right\rfloor \ + n + 2 $.
\end{theorem}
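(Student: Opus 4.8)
The plan is to exhibit an explicit independent subset of $A^+(B_n)$ of the stated size, built from the three "layers" of $A^+(B_n)$ described in Theorem~\ref{t.class.a+bn}: the $n$-support maps, the singleton support maps, and the constant maps. For the $n$-support layer I would take the independent set from Theorem~\ref{t.ind-gen-Bgn}(2) transported through the isomorphism of Lemma~\ref{l.iso.bgn-ns}; with the partition $\{I,J\}$ of $[n]$ into parts of sizes $\lceil n/2\rceil$ and $\lfloor n/2\rfloor$ and $G=S_n$ this contributes $n + n!\,|I|\,|J| = n + n!\lfloor n^2/4\rfloor$ elements, namely $\{(i,i;id)\mid i\in[n]\}\cup\{(i,j;\sigma)\mid i\in I,\ j\in J,\ \sigma\in S_n\}$. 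To this I would adjoin one singleton support map, say $\sis{(1,1)}{(1,1)}$, and one constant map, say $\xi_{(1,1)}$, giving a set $\mathcal{Y}$ of exactly $n!\lfloor n^2/4\rfloor + n + 2$ elements. The conclusion $r_4(A^+(B_n))\ge |\mathcal{Y}|$ then follows once $\mathcal{Y}$ is shown to be independent.

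The independence argument splits into three cases according to which element $a\in\mathcal{Y}$ we remove. First, if $a$ is one of the $n$-support maps: by Lemma~\ref{l.a+bnc-pro}, any product equal to an $n$-support map uses only $n$-support factors, so $a\in\langle\mathcal{Y}\setminus\{a\}\rangle$ would force $a\in\langle(\mathcal{Y}\cap A^+(B_n)_n)\setminus\{a\}\rangle$, contradicting the independence of the $n$-support layer guaranteed by Theorem~\ref{t.ind-gen-Bgn}(2) and Lemma~\ref{l.iso.bgn-ns}. Second, if $a=\xi_{(1,1)}$: a product equal to a constant map $\xi_{(1,1)}$ must, reading left to right, have all factors before and including the first constant factor behave appropriately; but the only constant map in $\mathcal{Y}\setminus\{a\}$ is absent, and a product with no constant factor is never constant (again Lemma~\ref{l.a+bnc-pro}(1)), so $\xi_{(1,1)}\notin\langle\mathcal{Y}\setminus\{a\}\rangle$. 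Third, if $a=\sis{(1,1)}{(1,1)}$: by Lemma~\ref{l.a+bnc-pro}(3) a product equal to a singleton support map needs a singleton support factor, and the only one available is being removed, so $\sis{(1,1)}{(1,1)}\notin\langle\mathcal{Y}\setminus\{a\}\rangle$.

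The main obstacle I anticipate is the first case — verifying that removing one of the $n!\lfloor n^2/4\rfloor + n$ many $n$-support maps still leaves it outside the generated subsemigroup. One must be careful that the potential generators include the singleton and constant maps, but Lemma~\ref{l.a+bnc-pro}(2) neatly kills that concern: the moment a non-$n$-support factor appears in a product, the product itself is no longer of $n$-support, so it cannot equal $a$. Thus the problem genuinely reduces to independence inside the subsemigroup $A^+(B_n)_n\cup\{\xi_\vt\}\cong B(S_n,n)$, which is exactly what Theorem~\ref{t.ind-gen-Bgn}(2) provides; one should note the roles of $\lceil n/2\rceil$ and $\lfloor n/2\rfloor$ may be interchanged freely since $\lceil n/2\rceil\cdot\lfloor n/2\rfloor=\lfloor n^2/4\rfloor$. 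A minor point to check is that $\sis{(1,1)}{(1,1)}$ and $\xi_{(1,1)}$ are genuinely not in the $n$-support layer, which is immediate from their supports having sizes $1$ and $n^2+1$ respectively. Once these pieces are assembled the bound follows by simply counting $|\mathcal{Y}|$.
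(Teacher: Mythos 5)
Your proposal is correct and takes essentially the same route as the paper: the paper's proof exhibits exactly the same set (the image under the isomorphism of Lemma \ref{l.iso.bgn-ns} of the independent set from Theorem \ref{t.ind-gen-Bgn}(2), together with $\xi_{(1,1)}$ and $\sis{(1,1)}{(1,1)}$) and asserts its independence with the same count $n!\left\lfloor n^2/4\right\rfloor + n + 2$. Your three-case analysis via Lemma \ref{l.a+bnc-pro} merely fills in the details the paper leaves as ``one can observe'' (the paper cites Lemma \ref{l.ind-gen} for the two added elements, but the underlying support argument is the one you give).
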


\begin{proof}
Using Theorem \ref{t.ind-gen-Bgn}(2), Lemma \ref{l.iso.bgn-ns} and Lemma \ref{l.ind-gen}, one can observe that the set

\[\{(i, i; id)\; |\; i \in [n]\} \cup \{(i, j; \sigma)\; |\; i \in I, j \in J, \sigma \in S_n\} \cup \left \{\xi_{(1 , 1)},\; \sis{(1, 1)}{(1, 1)}  \right \},\] where $I$ and $J$ are as in Theorem \ref{t.ind-gen-Bgn}, is an independent subset in $A^+(B_n)$.
\end{proof}

\section{Large rank}

In this section, we obtain the large rank of $A^+(B_n)$. An element $a$ of a multiplicative semigroup $\G$ is said to be \emph{indecomposable} if there do not exist $b , c \in \G \setminus \{a\}$ such that $a = bc$. The following  key result by Howie and Ribeiro is useful to find the large rank of a finite semigroup.

\begin{theorem}[\cite{a.Howie00}]\label{r5-lsgp}
Let $\G$ be a finite semigroup and let $V$ be a proper subsemigroup of $\G$ with the largest possible size. Then $r_5(\G) = |V|+1.$ Hence, $r_5(\G) = |\G|$ if and only if $\G$ contains an indecomposable element.
\end{theorem}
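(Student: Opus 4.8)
The plan is to establish the equality $r_5(\G) = |V|+1$ by proving the two inequalities separately, working directly from the definition $r_5(\G) = \min\{k : \text{every } U \subseteq \G \text{ with } |U| = k \text{ satisfies } \langle U\rangle = \G\}$, and then to deduce the ``hence'' clause by analysing the case $|V| = |\G| - 1$. Implicitly one should first note that the quantity $|V|$ is well defined, i.e. that $\G$ does possess a proper subsemigroup of largest size: for $|\G| \ge 2$ this is immediate since, for instance, any singleton consisting of an idempotent (finite semigroups have idempotents) is a proper subsemigroup, and among the finitely many proper subsemigroups one of maximum cardinality exists.

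For the lower bound $r_5(\G) \ge |V|+1$: since $V$ is a subsemigroup, $\langle V\rangle = V \ne \G$, so $V$ is itself a subset of $\G$ of cardinality $|V|$ that fails to generate $\G$; hence no $k \le |V|$ can have the defining property of $r_5$, giving $r_5(\G) \ge |V|+1$. For the upper bound $r_5(\G) \le |V|+1$: I would take an arbitrary $U \subseteq \G$ with $|U| = |V|+1$ and show $\langle U\rangle = \G$. The set $\langle U\rangle$ is a subsemigroup of $\G$ containing $U$, so $|\langle U\rangle| \ge |U| = |V|+1 > |V|$; by the maximality of $|V|$ among \emph{proper} subsemigroups, $\langle U\rangle$ cannot be proper, so $\langle U\rangle = \G$. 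As $U$ ranges over all subsets of cardinality $|V|+1$, this yields $r_5(\G) \le |V|+1$, and combining the two bounds gives $r_5(\G) = |V|+1$.

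For the final equivalence, observe that $r_5(\G) = |\G|$ holds if and only if $|V| = |\G|-1$. If $|V| = |\G|-1$, write $V = \G \setminus \{a\}$ for the unique $a \in \G$ not in $V$; then $V$ being closed under multiplication says exactly that there are no $b,c \in \G \setminus \{a\}$ with $bc = a$, i.e. $a$ is indecomposable. Conversely, if $\G$ contains an indecomposable element $a$, then $\G \setminus \{a\}$ is closed under multiplication, hence a proper subsemigroup of cardinality $|\G|-1$, which is necessarily of maximum possible size among proper subsemigroups; therefore $|V| = |\G|-1$ and $r_5(\G) = |V|+1 = |\G|$.

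The only genuinely delicate point — the step I would be most careful with — is the translation between ``$\G \setminus \{a\}$ is closed under multiplication'' and ``$a$ is indecomposable'': products in which $a$ itself appears as a factor are irrelevant to the closure of $\G \setminus \{a\}$, so the sole obstruction to closure is an equation $a = bc$ with $b,c \ne a$, which is precisely the negation of indecomposability as defined above. Everything else is a routine unwinding of the definition of $r_5$ together with the maximality of $V$.
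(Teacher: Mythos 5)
The paper does not prove this statement at all: it is imported verbatim from Howie and Ribeiro \cite{a.Howie00} and used as a black box, so there is no internal proof to compare your argument against. Your proof is correct and is essentially the standard one. Two small remarks: for the lower bound you assert that no $k\le |V|$ has the defining property of $r_5$, but your explicit witness only covers $k=|V|$; the missing one-liner is that any $k$-subset $U\subseteq V$ satisfies $\langle U\rangle\subseteq V\ne\G$ (equivalently, the property ``every $k$-subset generates $\G$'' is upward monotone in $k$, so failure at $|V|$ forces failure below). The upper bound via maximality of $|V|$ among proper subsemigroups, and the equivalence $r_5(\G)=|\G|$ iff $|V|=|\G|-1$ iff $\G\setminus\{a\}$ is closed under multiplication for some $a$ iff $a$ is indecomposable, are exactly right, and your care about products involving $a$ itself being irrelevant to the closure of $\G\setminus\{a\}$ is the correct observation. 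The only loose end is the degenerate case $|\G|=1$ (where the existence of a largest proper subsemigroup needs a convention about the empty subsemigroup), which is immaterial to how the paper applies the theorem to $A^+(B_n)$.
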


\begin{proposition}\label{a+bnc-dec}
For $n \ge 2$, all the elements of $A^+(B_n)$ are decomposable.
\end{proposition}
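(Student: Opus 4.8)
The plan is to exhibit, for each of the three types of elements of $A^+(B_n)$ listed in Theorem~\ref{t.class.a+bn}, an explicit factorization into two elements both distinct from the given one. By Theorem~\ref{t.class.a+bn}, an arbitrary element of $A^+(B_n)$ is either an $n$-support map $(p,q;\sigma)$, a constant map $\xi_{(p,q)}$ or $\xi_\vt$, or a singleton support map $\sis{(k,l)}{(p,q)}$, so it suffices to treat these cases one at a time.

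First I would handle the $n$-support maps. For $f = (p,q;\sigma)$, pick any index $r \in [n]$ and write $f = (p,r;\sigma)(r,q;id)$; the composition rule established in the proof of Lemma~\ref{l.a+bnc-pro}(2) shows this product is exactly $(p,q;\sigma \cdot id) = (p,q;\sigma)$. Provided $\sigma \ne id$, the two factors $(p,r;\sigma)$ and $(r,q;id)$ are both different from $f$ for a suitable choice of $r$; and if $\sigma = id$, one instead writes $f = (p,r;\tau)(r,q;\tau^{-1})$ for a nontrivial $\tau \in S_n$ (using $n \ge 2$), again giving two factors distinct from $f$. The zero map $\xi_\vt$ decomposes, e.g., as $\sis{(k,l)}{(p,q)}\,\xi_\vt$ for any singleton support map, since the composite is $\xi_\vt$ and both factors differ from $\xi_\vt$. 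For a constant map $\xi_{(p,q)}$, I would write $\xi_{(p,q)} = \xi_{(p',q')}(p',q;\rho)$ where $\rho \in S_n$ is chosen with $p'\rho = p$ and $(p',q')$ is chosen so that $\xi_{(p',q')} \ne \xi_{(p,q)}$: composing the constant map $\xi_{(p',q')}$ with the $n$-support map $(p',q;\rho)$ sends everything to $(p',q')(p',q;\rho) = (p'\rho, q) = (p,q)$, as required, and the $n$-support factor is certainly not a constant map.

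Finally, for a singleton support map $f = \sis{(k,l)}{(p,q)}$ I would mimic the factorization already used in the proof of Theorem~\ref{t.r2-a+bnc} and in Lemma~\ref{l.ind-gen}: choose $\sigma, \rho \in S_n$ with $k\sigma = 1$ and $1\rho = p$, and write
\[
\sis{(k,l)}{(p,q)} = (l,1;\sigma)\,\sis{(1,1)}{(1,1)}\,(1,q;\rho).
\]
This is a product of three elements, but grouping it as $\big((l,1;\sigma)\,\sis{(1,1)}{(1,1)}\big)(1,q;\rho)$ presents $f$ as a product $bc$ where $b$ is a singleton support map (composition of an $n$-support map with a singleton support map) and $c = (1,q;\rho)$ is an $n$-support map, hence $c \ne f$ and $b \ne f$ as long as $b$ genuinely differs from $f$; when it does not, one absorbs the leftmost factor on the other side instead, writing $f = (l,1;\sigma)\big(\sis{(1,1)}{(1,1)}(1,q;\rho)\big)$, whose first factor is an $n$-support map and thus not equal to $f$. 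One must just check that the two factors produced are never simultaneously forced to equal $f$, which is immediate since at least one of them always has support of a different size than $f$.

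The only mildly delicate point — and the main thing to be careful about rather than a genuine obstacle — is the bookkeeping ensuring that in each case at least one factor has support size different from that of $f$, so that it cannot coincide with $f$, while the other factor can be arranged to differ from $f$ by a suitable parameter choice. Lemma~\ref{l.a+bnc-pro} already controls how supports behave under composition, so this verification is routine; there is no hard step, only a short case analysis following the element classification of Theorem~\ref{t.class.a+bn}.
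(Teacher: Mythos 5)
Your case analysis follows the same general strategy as the paper (explicit factorizations for each support type from Theorem \ref{t.class.a+bn}), and your treatment of the $n$-support maps is correct and essentially equivalent to the paper's factorization $(k,p;\sigma)=(k,q;\tau)(q,p;\tau^{-1}\sigma)$. However, two of your cases fail as written. For the zero map you propose $\xi_\vt = \sis{(k,l)}{(p,q)}\,\xi_\vt$ and assert that ``both factors differ from $\xi_\vt$'' --- but the second factor \emph{is} $\xi_\vt$, and decomposability requires both factors to lie in $A^+(B_n)\setminus\{\xi_\vt\}$, so this establishes nothing. (The paper instead writes $\xi_\vt = \sis{(k,l)}{(p,q)}\,\sis{(m,n)}{(s,t)}$ with $(p,q)\ne(m,n)$.) For the singleton support maps, your construction anchored at $\sis{(1,1)}{(1,1)}$ breaks down exactly at the element $f=\sis{(1,1)}{(1,1)}$ itself: your first grouping has left factor $(l,1;\sigma)\,\sis{(1,1)}{(1,1)} = \sis{(k,l)}{(1,1)}$, which equals $f$ precisely when $(p,q)=(1,1)$, and your fallback grouping has right factor $\sis{(1,1)}{(1,1)}(1,q;\rho)=\sis{(1,1)}{(p,q)}$, which equals $f$ precisely when $(k,l)=(1,1)$; so for $f=\sis{(1,1)}{(1,1)}$ both groupings contain $f$ as a factor, regardless of how $\sigma,\rho$ are chosen. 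Your closing remark about support sizes does not address this: the danger was never that both factors simultaneously equal $f$, but that the singleton-support factor does. The paper avoids this by routing through an arbitrary intermediate pair, $\sis{(k,l)}{(p,q)} = \sis{(k,l)}{(m,n)}\,\sis{(m,n)}{(p,q)}$ with $(m,n)\notin\{(k,l),(p,q)\}$, which is available since $n^2\ge 4$.

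There is also an indexing slip in your constant-map case: with the paper's conventions the map $(p',q;\rho)$ has support $\{(i,p') : i\in[n]\}$, so $\xi_{(p',q')}(p',q;\rho)=\xi_\vt$ unless $q'=p'$; you would need to take $q'=p'$, or use the factor $(q',q;\rho)$ with $p'\rho=p$. (The paper sidesteps permutations entirely here via $\xi_{(p,q)} = \xi_{(m,n)}\,\sis{(m,n)}{(p,q)}$ with $(m,n)\ne(p,q)$.) All three defects are repairable --- the uniform fix, as in the paper, is to decompose through a freely chosen intermediate pair rather than anchoring at $(1,1)$ or at the element itself --- but as written your proof does not cover $\xi_\vt$, does not cover $\sis{(1,1)}{(1,1)}$, and miscomputes the constant-map factorization.
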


\begin{proof}
Refereing to  Theorem \ref{t.class.a+bn}, we give a decomposition for each element $f \in A^+(B_n)$ in the following cases.
\begin{enumerate}
\item $f$ is the zero element: $\xi_\vt$ = $\sis{(k, l)}{(p, q)}$$\sis{(m, n)}{(s, t)}$, for$(p, q) \ne (m, n)$.
\item $f$ is of full support: Let $f  = \xi_{(p, q)}$. Then $\xi_{(p, q)} = \xi_{(m, n)}$ $\sis{(m, n)}{(p, q)}$, for $(m, n) \ne (p, q)$.
\item $f$ is of singleton support: Let $f$ = $\sis{(k, l)}{(p, q)}$. Then $f =$ $\sis{(k, l)}{(m, n)}$ $\sis{(m, n)}{(p, q)}$, for $(m, n) \not\in \{(p, q), (k, l)\}$.
\item $f$ is an $n$-support map: Let $f  = (k, p; \sigma)$. Note that $f = (k, q; \tau) (q, p; \tau^{-1}\sigma)$, for $q \ne p$ and $\tau \ne id$.
\end{enumerate}
\end{proof}

In order to find the large rank of $A^+(B_n)$, we adopt the technique that is introduced in \cite{a.jk13-3}. The technique, as stated in Lemma \ref{gm-lsgp}, relies on the concept of prime subsets of semigroups.  A nonempty subset $U$ of a (multiplicative) semigroup $\G$ is said to be \emph{prime} if, for all $a, b \in \G$, \[ab \in U\; \mbox{ implies }\; a \in U \; \mbox{ or }\; b \in U.\]

\begin{lemma}[\cite{a.jk13-3}]\label{gm-lsgp}
Let $V$ be a proper subset of a finite semigroup $\G$. Then $V$ is a
smallest prime subset of $\G$ if and only if $\G \setminus V$ is a largest subsemigroup of $\G$.
\end{lemma}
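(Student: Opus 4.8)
The plan is to prove the set-theoretic equivalence directly from the definitions, observing that it is essentially the contrapositive/complement translation of the primeness condition. Write $W = \G \setminus V$, so that $V$ proper means $W$ is nonempty. First I would record the elementary reformulation: $V$ is prime precisely when, for all $a,b \in \G$, both $a \notin V$ and $b \notin V$ together imply $ab \notin V$; equivalently, $a, b \in W$ implies $ab \in W$. This last statement is exactly the assertion that $W$ is closed under the multiplication of $\G$, i.e.\ that $W$ is a subsemigroup. Thus ``$V$ is prime'' $\iff$ ``$\G \setminus V$ is a subsemigroup'', with $V$ nonempty corresponding to $\G \setminus V$ being a proper subset, and $V$ proper corresponding to $\G \setminus V$ being nonempty (hence a genuine subsemigroup).

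Next I would handle the extremal adjectives by a straightforward order-reversing argument on the complementation map $U \mapsto \G \setminus U$. This map is an inclusion-reversing bijection between subsets of $\G$; under the equivalence just established, it carries the collection of prime proper subsets bijectively onto the collection of (nonempty) subsemigroups of $\G$ that are proper, i.e.\ the proper subsemigroups. Since complementation reverses inclusion, a prime proper subset $V$ is smallest (minimum cardinality, or minimal — here cardinality-minimum is what is meant, matching the use in Theorem \ref{r5-lsgp}) among prime proper subsets if and only if its complement $\G \setminus V$ is largest among proper subsemigroups. Concretely: if $V$ is a smallest prime subset and $S$ were a proper subsemigroup with $|S| > |\G \setminus V|$, then $\G \setminus S$ would be a prime proper subset with $|\G \setminus S| < |V|$, a contradiction; and symmetrically in the other direction.

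I would then assemble these two observations into the stated biconditional, being careful about the one boundary subtlety: the lemma speaks of a \emph{proper} subset $V$ and a \emph{largest} subsemigroup of $\G$, and I should note that $\G$ itself is a (non-proper) subsemigroup, so ``largest subsemigroup'' must be read as ``largest proper subsemigroup'' for the statement to be non-vacuous; alternatively, one observes that $\G$ has a proper prime subset (for instance any nonempty $U$ with $\G\setminus U$ closed — and since every element of $A^+(B_n)$ is decomposable by Proposition \ref{a+bnc-dec}, or more generally whenever $\G$ is not a free-ish pathological case, such a $U$ exists), so the complement side is automatically a proper subsemigroup. I expect the genuine mathematical content to be essentially nil — the whole proof is a one-line De Morgan translation — so the only real ``obstacle'' is bookkeeping: making sure the nonemptiness of $V$ (required for primeness, since prime subsets are defined to be nonempty) and the properness of $V$ are correctly matched with the properness and nonemptiness of the complementary subsemigroup, and that the notion of ``smallest''/``largest'' is consistently cardinality-based. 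Since this is quoted verbatim as Lemma \ref{gm-lsgp} from \cite{a.jk13-3}, I would in fact simply cite that reference rather than reprove it, and the above is the sketch one would give if a self-contained argument were wanted.
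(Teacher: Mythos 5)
Your proposal is correct: the paper itself gives no proof of Lemma \ref{gm-lsgp}, quoting it from \cite{a.jk13-3}, and your complementation argument ($V$ is prime precisely when $\G\setminus V$ is closed under the product, together with the inclusion- and cardinality-reversing effect of taking complements) is exactly the intended short proof. You also correctly handle the only delicate point, namely matching the nonemptiness/properness of the prime subset with the properness/nonemptiness of the complementary subsemigroup, so that ``largest subsemigroup'' is read as largest \emph{proper} subsemigroup, consistent with its use in Theorem \ref{r5-lsgp}.
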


Using Lemma \ref{gm-lsgp} and Theorem \ref{r5-lsgp}, now we obtain the large rank of $A^+(B_n)$.

\begin{theorem}\label{r5-a+b2}
$r_5(A^+(B_2)) = 28.$
\end{theorem}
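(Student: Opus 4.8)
The plan is to apply Theorem~\ref{r5-lsgp}: since $A^+(B_2)$ has no indecomposable element (Proposition~\ref{a+bnc-dec}), we have $r_5(A^+(B_2)) = |V| + 1$ where $V$ is a largest proper subsemigroup, and by Lemma~\ref{gm-lsgp} this amounts to finding a \emph{smallest prime subset} $P$ of $A^+(B_2)$, whence $r_5(A^+(B_2)) = |A^+(B_2)| - |P| + 1$. By Theorem~\ref{t.class.a+bn}, $|A^+(B_2)| = (2!+1)\cdot 4 + 16 + 1 = 33$, so the claimed value $28$ forces $|P| = 6$; thus the real content is to exhibit a prime subset of size $6$ and prove no prime subset is smaller.

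For the construction, I would first analyze which small sets are prime using the decomposition patterns from Proposition~\ref{a+bnc-dec}. A natural candidate is built around a fixed index, say $1$: the primeness condition $ab \in P \Rightarrow a \in P$ or $b \in P$ must be checked against every factorization $ab$ of every element of $P$. The decompositions in Proposition~\ref{a+bnc-dec} show that singleton-support maps, full-support maps, and the zero map all factor through ``intermediate'' points $(m,n)$ that can be chosen freely; this flexibility is exactly what makes small sets fail to be prime, and it suggests that $P$ must contain enough elements to block all such factorizations simultaneously. Concretely I expect $P$ to consist of the maps whose relevant support/image data is ``pinned'' to a single coordinate — for instance something like $\{\sis{(1,1)}{(1,1)}, \xi_{(1,1)}, (1,1;id), (1,2;\sigma'), \ldots\}$ with the $n$-support part corresponding, under the isomorphism of Lemma~\ref{l.iso.bgn-ns}, to a smallest prime subset of $B(S_2,2)$. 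The $n$-support stratum $A^+(B_2)_2 \cup \{\xi_\vt\} \cong B(S_2,2)$ has $2\cdot 2\cdot 2 + 1 = 9$ elements, and a key subtask is to identify its smallest prime subset; combined with the interplay between strata forced by Lemma~\ref{l.a+bnc-pro}, this pins down the remaining elements of $P$.

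For the lower bound $|P| \ge 6$, the argument should be a case analysis driven by Lemma~\ref{l.a+bnc-pro} and the explicit decompositions. One shows: (i) $P$ must meet the zero element's factorizations, (ii) $P$ must contain enough full-support maps because of the decomposition $\xi_{(p,q)} = \xi_{(m,n)}\sis{(m,n)}{(p,q)}$ for all choices of $(m,n)$, (iii) similarly for singleton-support maps via $\sis{(k,l)}{(p,q)} = \sis{(k,l)}{(m,n)}\sis{(m,n)}{(p,q)}$, and (iv) $P \cap (A^+(B_2)_2 \cup \{\xi_\vt\})$ must be a prime subset of that subsemigroup, hence has size at least whatever the minimum is for $B(S_2,2)$. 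Summing the contributions of the incomparable strata gives $|P| \ge 6$.

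The main obstacle will be step (iv) together with the bookkeeping in the lower bound: verifying that a proposed $6$-element set is genuinely prime requires checking primeness against \emph{all} factorizations — including cross-stratum ones like $\xi_c = (\text{constant})\circ(\text{non-constant})$ and $\xi_c = (\text{constant})\circ(\text{anything})$ — and these interact non-trivially. Equally delicate is proving optimality: one must rule out clever small prime sets that exploit, say, a single constant map together with a carefully chosen family of singleton-support maps. I would organize the lower bound by intersecting an arbitrary prime $P$ with each of the three strata $A^+(B_2)_1$, $A^+(B_2)_n$, $\mathcal{C}_{B_2}$ and bounding each intersection below, being careful that the zero map lies in the closure of the singleton stratum under composition and so contributes to the count there.
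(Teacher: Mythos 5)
There is a genuine gap, and it starts with arithmetic: by Theorem~\ref{t.class.a+bn}, $|A^+(B_2)| = (2!+1)\cdot 2^2 + 2^4 + 1 = 12 + 16 + 1 = 29$, not $33$. Since $r_5 = |\G| - |P| + 1$ for a smallest prime subset $P$, the claimed value $28$ forces $|P| = 2$, not $6$. This error derails your whole plan: the lower-bound argument you sketch (intersecting a prime set with the three strata and summing contributions to get $|P| \ge 6$) is aimed at proving a statement that is in fact false, so no amount of bookkeeping can make it work. The paper's proof exhibits the two-element prime subset $V = \{(1,2;id),\,(1,2;\sigma)\}$, where $\sigma = (1\;2)$: by Lemma~\ref{l.a+bnc-pro}(2) any factorization of an $n$-support map uses only $n$-support factors, and for $n=2$ one can list all such factorizations of $(1,2;id)$ and $(1,2;\sigma)$ explicitly and check each has a factor in $V$. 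Minimality is then immediate: a one-element prime subset would be an indecomposable element, contradicting Proposition~\ref{a+bnc-dec}. So the ``hard'' optimality step you anticipate collapses to a one-line observation once the correct target size is known.

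Your structural guess about the shape of $P$ is also off in an instructive way. You expect $P$ to be pinned to a coordinate and to contain a constant map and a singleton-support map (by analogy with the generating-set results and with the $n \ge 3$ case, where the smallest prime subset is the set of all $n^2$ full-support constants, Theorem~\ref{lr-a+bnc}). The reason $n = 2$ behaves differently is precisely the scarcity of factorizations of the maps $(1,2;\tau)$: the decomposition $(k,p;\sigma) = (k,q;\tau)(q,p;\tau^{-1}\sigma)$ with $q \notin \{p,k\}$, which produces many factorizations when $n \ge 3$, is unavailable when $n = 2$, so a tiny set of $n$-support maps can already block every factorization. Identifying this special feature of $B_2$ is the missing idea in your proposal.
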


\begin{proof}
We show that the set $V = \{(1, 2; id), (1, 2; \sigma)\},$ where $\sigma$ is the cycle $(1\; 2)$ in $S_2$, is a smallest prime subset of $A^+(B_2)$ so that $r_5(A^+(B_2)) = 28$. Observe that
\[(1, 2; id) = (1, 1; id)(1, 2; id) = (1, 2; id)(2, 2; id) = (1, 1; \sigma)(1, 2; \sigma) = (1, 2; \sigma)(2, 2; \sigma)\]
and \[(1, 2; \sigma) = (1, 1; id)(1, 2; \sigma) = (1, 2; id)(2, 2; \sigma) = (1, 1; \sigma)(1, 2; id) = (1, 2; \sigma)(2, 2; id)\] are all the possible decompositions of $(1, 2; id)$ and $(1, 2; \sigma)$, respectively, with the elements of $A^+(B_2)$. Note that, every decomposition has at least one element from $V$ so that $V$ is a prime subset.

If $U$ is a prime subset of $A^+(B_2)$ with $|U| < |V|$, then $|U| = 1$. Hence, the element of $U$ will be indecomposable; a contradiction to Proposition \ref{a+bnc-dec}. Consequently, $V$ is a smallest prime subset of $A^+(B_2)$.
\end{proof}

\begin{theorem}\label{lr-a+bnc}
For $ n \geq 3,\,\, r_5(A^+(B_n)) = (n!)n^2+n^4+2.$
\end{theorem}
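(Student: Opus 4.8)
By Theorem~\ref{r5-lsgp} together with Lemma~\ref{gm-lsgp}, computing $r_5(A^+(B_n))$ amounts to finding a smallest prime subset $V$ of $A^+(B_n)$; then $r_5(A^+(B_n)) = |A^+(B_n)| - |V| + 1$. Since $|A^+(B_n)| = (n!+1)n^2 + n^4 + 1$ by Theorem~\ref{t.class.a+bn}, the claimed value $(n!)n^2 + n^4 + 2$ corresponds exactly to $|V| = n^2$. So the plan is to exhibit a prime subset of size $n^2$ and prove no prime subset is smaller.

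\textbf{The candidate.} Mirroring the $n=2$ case in Theorem~\ref{r5-a+b2}, where $V$ consisted of all $n$-support maps $(1, 2; \sigma)$ with fixed ``row'' data, I would take
\[ V = \{ (1, 2; \sigma) \mid \sigma \in S_n \} \]
when $n = 2$; but for general $n$ this has size $n!$, which is the wrong count. The correct guess, forced by $|V| = n^2$, is instead a set of $n$-support maps indexed by a choice that has $n^2$ options independent of $n!$ — namely fix a single permutation and vary the source/target indices, or more plausibly take
\[ V = \{ (p, q; \sigma_0) \mid p, q \in [n] \} \]
for one fixed $\sigma_0 \ne id$, or a suitable ``block'' of $n$-support maps of size $n^2$. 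I would verify primeness by Lemma~\ref{l.a+bnc-pro}: any product landing in $A^+(B_n)_n$ must have all factors in $A^+(B_n)_n$, and by the composition rule $(p_i, q_i; \sigma_i)(p_{i+1}, q_{i+1}; \sigma_{i+1})$ is $(p_i, q_{i+1}; \sigma_i \sigma_{i+1})$ when $q_i = p_{i+1}$ (else $\xi_\vt$); one then checks that every two-factor decomposition of each $f \in V$ has at least one factor in $V$, exactly as the explicit decompositions were enumerated for $n=2$.

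\textbf{Minimality.} To show $V$ is smallest, suppose $U$ is a prime subset with $|U| < n^2$. The key point is that the complement $A^+(B_n) \setminus U$ must be a subsemigroup (Lemma~\ref{gm-lsgp}); I would argue that a subsemigroup of $A^+(B_n)$ missing fewer than $n^2$ $n$-support maps is forced to contain all of $A^+(B_n)$, using the Brandt-semigroup structure from Lemma~\ref{l.iso.bgn-ns} — a proper subsemigroup of $B(S_n, n)$ cannot omit too few nonzero elements because of the rigidity of the Brandt multiplication (products $(i,\sigma,j)(j,\tau,k) = (i,\sigma\tau,k)$ let one ``reach'' many elements from few). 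One also has to rule out prime subsets meeting the singleton-support or constant parts; here Proposition~\ref{a+bnc-dec} (every element decomposes) handles the size-$1$ case, and a counting/closure argument handles intermediate sizes. The \textbf{main obstacle} I anticipate is precisely this lower bound: one must show that \emph{any} prime subset of size $< n^2$ leads to a contradiction, which requires understanding exactly which subsemigroups of $A^+(B_n)$ have complement of size $< n^2$ — this is a delicate structural analysis of large subsemigroups of $A^+(B_n)$, considerably harder than the $n=2$ brute-force enumeration, and the case $n \ge 3$ is separated out in the theorem statement precisely because $n! > n^2$ changes which ``block'' realizes the minimum.
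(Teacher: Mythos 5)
Your reduction to finding a smallest prime subset of size $n^2$ is the right frame, but your candidate set is wrong, and this is a genuine gap rather than a cosmetic one. The set $V = \{(p, q; \sigma_0) \mid p, q \in [n]\}$ for a fixed $\sigma_0$ is not prime when $n \ge 3$: writing $(p, q; \sigma_0) = (p, r; \tau)(r, q; \tau^{-1}\sigma_0)$ with any $\tau \notin \{id, \sigma_0\}$ (such $\tau$ exists since $|S_n| \ge 6$) gives a decomposition in which neither factor has permutation part $\sigma_0$, so neither factor lies in $V$. The same obstruction kills the other natural guess extrapolated from the $n=2$ case, namely $\{(1,2;\sigma) \mid \sigma \in S_n\}$: for $n \ge 3$ one can route through an index $r \notin \{1,2\}$. (This, not the inequality $n!$ versus $n^2$, is why $n \ge 3$ is treated separately; note $3! = 6 < 9 = 3^2$.) The paper's choice is entirely different and does not sit inside the $n$-support part at all: it takes $V = \{\xi_{(i,j)} \mid i, j \in [n]\}$, the $n^2$ full-support constant maps, whose primeness is immediate from Lemma \ref{l.a+bnc-pro}(1) (a product is of full support if and only if some factor is), with no enumeration of decompositions needed.

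The second gap is the minimality bound, which you correctly identify as the main obstacle but do not actually prove; your suggestion to analyze large subsemigroups of $B(S_n,n)$ via Lemma \ref{l.iso.bgn-ns} would at best control prime subsets contained in the $n$-support part, whereas one must handle arbitrary prime subsets $U$ with $|U| < n^2$. The paper's argument is a direct case analysis on $U$ itself: first, $U \not\subset V$, since if $\xi_{(p,q)} \in V \setminus U$ then any $\xi_{(s,t)} \in U$ factors as $\xi_{(p,q)}\,\sis{(p,q)}{(s,t)}$ with both factors outside $U$; then, taking $f \in U \setminus V$, one counts decompositions of $f$ according to whether $f$ is $\xi_\vt$, a singleton-support map, or an $n$-support map, and shows in each case that primeness forces $|U| \ge n^2$ (with a separate small argument for $n = 3$ in the $n$-support case, where $n! = 6 < 9$). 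Without a correct prime candidate and without this counting step, the proposal does not yet establish either the upper or the lower estimate on the size of a smallest prime subset.
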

\begin{proof}
 We prove that $V = \{\xi_{(i, j)} \mid i, j \in [n] \}$ is a smallest prime subset of $A^+(B_n)$. Since $|V| = n^2$, the result follows from Theorem \ref{t.class.a+bn}.

By Lemma \ref{l.a+bnc-pro}(1), $V$ is a prime subset of $A^+(B_n)$. Let $U$ be a prime subset of $A^+(B_n)$ such that $|U|< |V|$. If $U \subset V$, then let $g = \xi_{(p, q)} \in V \setminus U$. Now, for $h = \xi_{(s, t)} \in U$, we have $h'$ =$\sis{(p, q)}{(s, t)}$  such that  $h = gh'$; which is not possible with $U$. Thus, $U \not \subset V$.

Let $f \in U \setminus V$. Then, $f$ can be (i) $\xi_\vt$, (ii)  a singleton support map, or (iii) an $n$-support map. In all the cases we observe that $|U| \ge n^2$, which is a contradiction to the choice of $U$.

(i) $f = \xi_\vt$: Note that $f = \xi_{(p, q)}$$\sis{(m, n)}{(p, q)}$, where $(p, q) \ne (m, n)$. For each pair $(p, q)$ we have one such decomposition of $f$ and hence there are at least $n^2$ such decompositions. From each decomposition one of the components, viz. a map with the image set $\{(p, q)\} \cup \{\vt\}$ is in $U$ so that $|U| \ge n^2$.

(ii) $f$ is singleton support map: Let $f$ = $\sis{(k, l)}{(p, q)}$. Note that  $f =$$\sis{(k, l)}{(m, n)}$$\sis{(m, n)}{(p, q)}$. For $(m, n) \ne (k, l)$, we have $n^2 - 1$ such decompositions of $f$. Being a prime subset, $U$ must contain at least one component from each decomposition so that $|U| \ge n^2-1$. Further, consider the decomposition \[\sis{(k, l)}{(p, q)} = \,\sis{(k, l)}{(k, l)}(l, q; \sigma),\] where $k \sigma = p$. One more element from the above decomposition should be in $U$. Thus, $|U| \ge n^2$.

(iii) $f$ is an $n$-support map: Let $f = (k, p; \sigma)$. Since $n \ge 3$, for fixed $q \not\in \{p, k\}$, consider the decomposition
\[(k, p; \sigma) = (k, q; \tau)(q, p; \tau^{-1}\sigma)\] one for each $\tau \in S_n$ so that there are
$n!$ such decompositions of $f$. Consequently, $|U| \ge n!$ so that $|U| > n^2$, for $n \ge 4$. For $n = 3$, in addition to above-mentioned six elements, we observe that there are another three elements in $U$. For instance, for each $\tau \in S_3$, none of the components in the decomposition \[(k, p; \sigma) = (k, k; \tau)(k, p; \tau^{-1}\sigma)\] are covered in any of the above-mentioned decompositions. Even if a left component reoccurs as a right component in any of the decompositions, at least  three of them will be in $U$. Hence, for $n \ge 3$, $|U| \ge  n^2$.
\end{proof}

\section{Conclusion}

In this work, we have investigated the ranks of $A^+(B_n)$, the multiplicative semigroup reduct of the affine near-semiring over an aperiodic Brandt semigroup, and obtained the small, lower and large ranks of $A^+(B_n)$, for all $n \ge 1$. While intermediate and upper ranks of $A^+(B_n)$ are not yet known, we have provided some lower bounds for these ranks.

In view of Theorem \ref{t.affbn}, the results embedded in this paper shall give us the respective ranks of the semigroup of affine transformations ${\rm Aff}(B_n)$. Note that $A^+(B_1) = {\rm Aff}(B_1)$ so that  $r_i({\rm Aff}(B_1)) = 3$, for $1 \le i \le 5$. For $n \ge 2$, observe that the $n$-support elements $(k,p; \sigma)$ with $p \ne k$ are not idempotent so that $r_1({\rm Aff}(B_n)) = 1$. While lower and large ranks of ${\rm Aff}(B_2)$ are 3 and 12 respectively, for $n \ge 3$, $r_2({\rm Aff}(B_n)) = n + 2$ and $r_5({\rm Aff}(B_n)) = (n!)n^2 + 2$.

\end{document}